\theoremstyle{plain}
\newtheorem{theorem}{Theorem}
\newtheorem{lemma}{Lemma}
\theoremstyle{definition}
\newtheorem{definition}{Definition}
\numberwithin{equation}{section}
\DeclareMathOperator{\sech}{sech}
\newcommand{\dd}[2]{\frac{\mathrm{d} #1}{\mathrm{d} #2}}
\begin{document}

\author[1,*]{Chris~J.~Budd}
\author[1,2]{Andrew~T.~T.~McRae}
\author[3]{Colin~J.~Cotter}
\affil[1]{Department of Mathematical Sciences, University of Bath, Bath, BA2 7AY, UK}
\affil[2]{Atmospheric, Oceanic and Planetary Physics, University of Oxford, Oxford, OX1 3PU, UK}
\affil[3]{Department of Mathematics, Imperial College London, London, SW7 2AZ, UK}
\affil[*]{Correspondence to: \texttt{c.j.budd@bath.ac.uk}}
\title{The scaling and skewness of optimally transported meshes on the sphere}
\date{}
\maketitle

\begin{abstract}
  In the context of numerical solution of PDEs, dynamic mesh redistribution
  methods (r-adaptive methods) are an important procedure for increasing
  the resolution in regions of interest, without modifying the
  connectivity of the mesh. Key to the success of these methods is that
  the mesh should be sufficiently refined (locally) and flexible in
  order to resolve evolving solution features, but at the same time not
  introduce errors through skewness and lack of regularity. Some
  state-of-the-art methods are bottom-up in that they attempt to
  prescribe both the local cell size and the alignment to features of
  the solution. However, the resulting problem is overdetermined,
  necessitating a compromise between these conflicting requirements.
  An alternative approach, described in this paper, is to prescribe only
  the local cell size and augment this an optimal transport condition to
  provide global regularity. This leads to a robust and flexible
  algorithm for generating meshes fitted to an evolving solution, with minimal
  need for tuning parameters. Of particular interest for geophysical
  modelling are meshes constructed on the surface of the sphere. The
  purpose of this paper is to demonstrate that meshes generated on the
  sphere using this optimal transport approach have good a-priori 
  regularity and that the meshes produced are naturally aligned to
  various simple features. It is further shown that the sphere's
  intrinsic curvature leads to more regular meshes than the plane. In
  addition to these general results, we provide a wide range of examples
  relevant to practical applications, to showcase the behaviour of
  optimally transported meshes on the sphere. These range from
  axisymmetric cases that can be solved analytically to more general
  examples that are tackled numerically. Evaluation of the singular
  values and singular vectors of the mesh transformation provides a
  quantitative measure of the mesh anisotropy, and this is shown to
  match analytic predictions.
\end{abstract}

\textbf{Keywords:} Mesh adaptation, mesh regularity, optimal transport

\section{Introduction}
\label{sec:Intro}

\subsection{Overview}

Many partial differential equations are naturally formulated on the
sphere, $S^2$, or on a thin spherical shell. A notable example is those
equations describing atmospheric or oceanic flows on the Earth, which
are essential in weather forecasting and climate predictions. To find
approximate solutions to these, it is common to first define a mesh on
the sphere (perhaps extended in vertical columns in the case of a thin
shell). The equations are then discretized with respect to this mesh,
using, for example, a finite difference or finite volume method.

There are many considerations for constructing a suitable mesh. Firstly,
the mesh must be a reasonable approximation of the analytic domain. It
is important that the solution can be faithfully represented on the
mesh; this may be non-trivial if the solution develops small-scale
features that evolve and move around over time. In
\citet{slingo2009developing}, it is highlighted that a major limitation
of the development of climate models is the lack of mesh resolution when
faced with climatic phenomena on many scales. The resulting need for
using some form of mesh refinement to resolve important atmospheric
features is emphasised. Issues can arise from interactions between the
mesh and the numerical method being used, as described in
\citet{staniforth2012horizontal}, where the desirability of particular
degree-of-freedom ratios between different fields (such as air pressure
and wind speed) leads to constraints on the topology of the mesh.
Furthermore, if computational efficiency is important, a structured mesh
is desirable. These allow direct addressing to be used, and so generally
lead to faster calculations than unstructured meshes -- although the
difference can be minimised in the spherical shell case since the radial
direction always provides exploitable structure
\citep{macdonald2010general,bercea2016structure}.

Crucial to the success of such methods is the development of algorithms
which (with minimal operator intervention) can generate a mesh rapidly.
This mesh must be able to resolve the small scale features of the
underlying solution, align itself to anisotropic solution features, and
yet be sufficiently regular to avoid errors due to excessive mesh
skewness. One method for doing this is r-adaptive mesh relocation, in
which a fixed number $N$ of mesh points, with prescribed connectivity,
is moved around the sphere so that points are concentrated in regions
requiring higher mesh resolution. In earlier papers
\citep{weller2016mesh,mcrae2018optimal}, we have demonstrated that
r-adaptive mesh relocation using optimal transport regularisation can be
implemented, flexibly and rapidly, both in the plane and on the surface
of the sphere. These meshes have been shown to avoid tangling and to
be capable of following time-evolving features with excellent resolution
of small scales. The unchanging mesh topology in the r-adaptive approach
also means that all data structures remain constant throughout the
simulation; these data structures can be based on well-established
static mesh topologies such as cubed-sphere or icosahedral meshes.

The purpose of this paper is to provide novel analytical estimates for
the regularity of such meshes on the sphere by exploiting the structure
of the Monge--Ampère equation. We will show that, in general, meshes on
the sphere have better regularity than those on the plane. The general
theory will be illustrated by firstly looking at some specific examples
relevant to practical applications, where we have exact analytic
descriptions of optimally transported meshes, and then looking at some
numerically-generated meshes for more challenging examples.
Comparisons will be made with other methods for mesh generation on the
sphere. We aim to convince the reader that the ease and flexibility of
use of the optimal transport methods, combined with their good
regularity properties, make them very suitable for calculating rapidly
evolving PDEs defined on the surface of the sphere, such as those used
in geophysical modelling.

\subsection{Some existing mesh generation methods and their properties}

We can broadly define three approaches to mesh generation for transient
simulations. In the first approach, the mesh is (reasonably) uniform and
is \emph{static} -- the mesh does not evolve with the simulation.
In the second approach, the mesh is again static, but now non-uniform,
with user-prescribed local resolution. These meshes are often
unstructured. A typical example is an ocean simulation in which the
coastline is resolved. In the third approach, the mesh is
\emph{dynamic}, changing as the solution evolves. This case further
separates into mesh refinement methods and mesh relocation methods
(r-adaptive methods), which we discuss in this paper.

The meteorological community have traditionally used a
latitude--longitude mesh of the sphere, in which the sphere is divided
into cells by lines of constant latitude and constant longitude. This
mesh has the advantages of being fully structured and of having
quadrilateral cells and orthogonal gridlines, which are beneficial for
certain numerical schemes. However, the severe resolution clustering at
the poles is problematic for numerical methods and for parallel
efficiency. As a response to this problem, more uniform meshes have been
considered, particularly in recent years
\citep{williamson2007evolution,staniforth2012horizontal}. Numerous
`next-generation' models use varieties of cubed-sphere or icosahedral
meshes \citep{ullrich2017dcmip} (the spectral community has a longer
history of using `reduced' grids \citep{hortal1991reduced}). Both
cubed-sphere and icosahedral meshes can be represented as a collection
of structured patches. The standard varieties of these meshes are fairly
uniform, with cell areas that vary by at most a factor of two. These
are, therefore, not immediately suitable for resolving evolving
small-scale features in the solution. However, we can generalise to
using meshes that are similar to these, but which also make use of
r-adaptive mesh redistribution or a local refinement strategy to
increase the resolution in particular regions. For atmospheric flows,
this may be appropriate when the solutions of the governing PDEs develop
small-scale features in particular locations, such as atmospheric fronts
or equatorial waves.

Dynamic mesh adaptivity on the sphere is used in
\citet{mccorquodale2015adaptive} and \citet{ferguson2016analyzing} for
a shallow-water model, with impressive results. These use local mesh
refinement, rather than r-adaptivity, on an underlying cubed-sphere
grid. By exploiting the structure of the cubed-sphere grid -- each of
the six panels is locally cartesian -- the resulting adaptivity problem
is similar to adaptive mesh refinement on the plane, with extra care
needed where the panels join. Existing mesh relocation methods on the
sphere appear to be aimed at static, rather than dynamic, adaptivity. An
approach based on spring dynamics is used to generate meshes for the
NICAM dynamical core \citep{satoh2014nonhydrostatic}, although this is
motivated by producing maximally-uniform meshes -- arguably the opposite
of adaptivity! A prescribed mesh is projected onto the sphere. The mesh
is then smoothed by considering the vertices to be connected by springs
and allowing the mesh to relax to a minimal-energy configuration. The
paper \citet{tomita2002optimization} attempts to iron out the small
non-uniformities in refined icosahedral grids by varying the natural
spring length that is used. The later paper \citet{iga2014improved}
shows that a natural spring length of zero is a natural choice from
energy considerations. However, this leads to mesh distortion around the
12 five-neighbour points in the mesh, and so a custom transformation is
introduced in the neighbourhood of these points. The more recent papers
\citet{iga2015smooth,iga2017equatorially} deliberately generate grids
with higher resolution near the equator. This is done by introducing
meshes with more cells near the equator (see Figure 1 in both papers),
rather than basic icosahedral grids. The paper
\citet{ringler2011exploring} uses spherical centroidal Voronoi
tessellation, with a spatially-varying mass distribution, to produce a
static mesh with increased resolution in one part of the world. The
method is not strictly r-adaptivity: the number of mesh points remains
the same, but the connectivity is allowed to vary in order to reduce
cell stretching.

These mesh relocation methods are not particularly well-suited to
dynamic adaptivity. The NICAM grids are very pleasing, but their
generation required significant human intervention and the tuning of
free parameters. This is reasonable for static adaptivity, but the
methods do not easily generalise to producing meshes that can resolve
arbitrary time-varying features. A distinct approach to introducing mesh
adaptivity using spring dynamics would be to vary the spring constants
and/or the natural lengths. It is feasible that this process could be
performed each timestep. However, the relationship between the spring
parameters and the resulting mesh resolution is complicated -- it is far
from clear what spring parameters are needed in order to produce the
desired resolution distribution. The centroidal Voronoi approach
described in \citet{ringler2011exploring} relies on Lloyd's algorithm,
which is extremely expensive. As a result, this approach is likely
infeasible for dynamic adaptivity. Furthermore, the changing
unstructured connectivity would be hard to couple to a PDE solver.

An alternative approach for constructing an r-adaptive mesh is to
explicitly prescribe the local scale of the mesh, via a
(solution-dependent) mesh density monitor function, while imposing
global regularity in the form of an optimal transport constraint. This
has been implemented and analysed on the plane
\citep{budd2006parabolic,budd2009moving,chacon2011robust,browne2014fast,budd2015geometry},
and in recent papers, we have extended this to produce solution-adapted
meshes on the sphere \citep{weller2016mesh,mcrae2018optimal}.
These methods used an optimal
transport approach linked to the solution of a (version of the)
Monge--Ampère equation, posed on the tangent bundle to the sphere, to
produce regular-looking meshes with the desired (spatially-varying)
density of mesh points. The r-adaptive approach, in which a fixed number
of mesh points with constant connectivity is moved around, is
well-suited to PDE computations as it allows the use of fixed and simple
data structures, and requires no modification on parallel architectures.
The papers \citet{weller2016mesh,mcrae2018optimal} were primarily
computational. In this companion paper, we look at the geometry of the
resulting meshes generated using the optimal transport methods,
particularly local mesh scaling, local mesh regularity, and mesh
alignment. We demonstrate that these methods have the merit of being a
systematic approach to mesh generation that delivers meshes of
prescribed local scale and of provable regularity, through the use of
robust algorithms which are relatively simple to implement. They also
have significant flexibility in the control of the mesh points. Optimal
transport not only gives a cheap, reliable, robust and flexible means of
generating a (solution-dependent) mesh, but also has provable regularity
bounds. Furthermore, we show that there are a number of exact solutions
of the Monge--Ampère equation on the sphere which generate meshes that
are appropriate for the solution of certain meteorological problems
described in, for example, \citet{slingo2009developing}.

\subsection{Summary of this paper}

The remainder of this paper is structured as follows. In
\cref{sec:prelim}, we introduce some basic theory of r-adaptive mesh
relocation strategies, based on controlling the density of mesh points.
In particular, we consider meshes which equidistribute a monitor
function, $m$, of the mesh density. We then describe some measures of
mesh quality for meshes obtained by relocation strategies. In
\cref{sec:otsphere}, we consider the construction of meshes on the
sphere, and possibly more general two-dimensional Riemannian manifolds.
This is achieved through the use of optimal transport maps from the
sphere to itself, which act on a base mesh of the sphere. The maps are
obtained by solving scalar partial differential equations of
Monge--Ampère type. We then make significant use of the structure
provided by the optimal transport formulation to derive some a priori
estimates of mesh regularity. These results follow directly from the
analytic theory of optimally transported maps. We draw the surprising
conclusion that, due to the positive intrinsic curvature of the sphere,
meshes on the sphere tend to have better formal regularity than
analogous meshes on the plane. In \cref{sec:axisym}, we consider the
specific case of monitor functions, $m$, which are axisymmetric about a
particular axis. By solving the equidistribution equation exactly in
this case, we derive analytic expressions for the related optimally
transported meshes, and the associated mesh quality measures. In
\cref{sec:numaxi}, we then apply the optimal transport maps, with
particular monitor functions, to some common meshes of the sphere,
including latitude--longitude, cubed-sphere and icosahedral meshes,
to construct examples of meshes of proven regularity. In
\cref{sec:planecomp}, we briefly compare a spherical example to a
`matching' planar example, and show that the geometry of the sphere
leads to a mesh of higher quality. In \cref{sec:numnonaxi}, we provide
more general computational examples on the sphere, looking at some
challenging examples. In each case, we compute the regularity of the
mesh and also show that the resulting meshes are naturally aligned to
the prescribed features. Some comparisons are made with similar meshes
obtained using different mesh construction algorithms such as the spring
dynamics method. Finally, in \cref{sec:conc}, we draw some conclusions
and consider future work and applications of these methods.

\section{Mesh construction through mesh point relocation, and measures of mesh regularity.}
\label{sec:prelim}

\subsection{Mesh construction}
In this paper, we consider an r-adaptive approach to mesh construction.
As described earlier, this involves a fixed number of mesh points being
\emph{relocated}, while the topology of the mesh remains unchanged.
Suppose that we wish to construct a mesh $\tau_P$ for simulating a
physical problem in a domain $\Omega_P$, where $\Omega_P$ lies in a
manifold $M$. We assume that $\tau_P$ will be specially adapted for the
problem and may be highly non-uniform. In this work, we are particularly
interested in the case where $M$ is the surface of the sphere, $S^2$,
with $\Omega_P = M$. However, we will frequently draw comparisons to the
planar case $M = \mathbb{R}^2$. Previous work on this planar case can be
found in \citet{budd2015geometry}.

We also define a `computational' (or `logical') domain
$\Omega_C \subseteq M$, with a computational mesh $\tau_C$ of prescribed
connectivity. We assume the cells of $\tau_C$ are reasonably
uniform---perhaps fully uniform in the planar case---having
shapes and sizes that do not vary too much. This is the case in an
icosahedral mesh on the sphere refined through repeated bisection, and
in a gnomonic cubed-sphere mesh. In the \emph{r}-adaptive approach, we
assume the existence of a bijective map $F: \Omega_C \to \Omega_P$, with
$\tau_P$ the image of $\tau_C$ under the action of this map. It follows
that $\tau_P$ will have the same topology (connectivity) as $\tau_C$.

We use $\vec{\xi}$ to denote a position vector in $\Omega_C$, and
$\vec{x}$ to denote the corresponding position in $\Omega_P$:
$F(\vec{\xi}) = \vec{x}$. Let $U$ be a small open set containing
$\vec{\xi}$, and let $V$ be the image of this set under the action of
$F$ (hence containing $\vec{x}$). We may compute the ratio of the
volumes (areas) of these two sets, $|V|/|U|$. In the limit $|U| \to 0$,
we define
\begin{equation}
  \label{eq:UVratio}
  r(\vec{\xi}) = \lim_{|U| \to 0} \frac{|V|}{|U|}
\end{equation}
to be the limiting area ratio. If $M = \mathbb{R}^2$, we have
\begin{equation}
  r(\vec{\xi}) = |\det J|,
\end{equation}
where $J$ is the Jacobian of the map $F$. If $M$ is a general Riemannian
manifold of dimension $d$ embedded in some $\mathbb{R}^n$, $r$ is now
the product of the first $d$ singular values of $J$. This coincides with
the \emph{pseudodeterminant} of $J$---the product of the non-zero
singular values---as long as $F$ is not degenerate.

In \emph{r}-adaptive methods, controlling this area ratio is always a
primary concern. An \emph{equidistribution} principle is widely used:
let $m(\vec{x})$ be a suitable \emph{monitor function}, traditionally
related to the error in representing the solution on the physical mesh.
We then seek a mesh where the area ratio is inversely proportional to
$m$:
\begin{equation}
  \label{eq:equi}
  m(\vec{x}) r(\vec{\xi}) = \alpha,
\end{equation}
where $\alpha$ is a normalisation constant that ensures the domains
$\Omega_C$ and $\Omega_P$ have the correct size (alternatively, we could
impose a condition on $m$ to ensure the correct scaling). The error in
representing the solution would then be \emph{equidistributed} between
cells of the physical mesh. The monitor function $m$ does not have to be
a proxy for interpolation error; a far more general monitor function can
be used (for example, \citet{weller2016mesh} shows a mesh of the Earth
adapted to the amount of precipitation that fell on a particular day).
By \cref{eq:equi}, if $m(\vec{x})$ is large in a region, the cells of
the physical mesh $\tau_P$ are small there (as $r(\vec{\xi})$ is forced
to be small in the preimage of this region). This is desirable if higher
resolution is sought in that area.

We refer to \cref{eq:equi} as the \emph{equidistribution condition}. It
is clear that it does not, on its own, lead to a well-posed mesh
generation problem (other than in one-dimension), since the resulting
map is far from unique. It is necessary to augment the equidistribution
condition with further conditions. Traditionally, these have been
constraints on mesh regularity such as orthogonality
\citep{thompson1998handbook} or alignment to a prescribed tensor field
\citep{huang2011adaptive}. In the latter approach, the resulting mesh is
then chosen to minimise some weighted sum of terms, each attempting to
enforce a separate condition. In particular, the mesh is not designed to
satisfy \cref{eq:equi} exactly.

An alternative, and powerful, technique is to use the concept of
\emph{optimal transport} \citep{villani2003topics,villani2009optimal};
previous work using this approach includes \citet{budd2006parabolic,
budd2009moving,chacon2011robust,browne2014fast}. We now seek a map $F$
satisfying \cref{eq:equi} exactly (up to discretisation error, at least)
so that the resulting mesh $\tau_P$ is ``as close as possible" to
$\tau_C$. This ``distance" between the meshes is defined as
\begin{equation}
  \int_{\Omega_C} \|\vec{x}(\vec{\xi}) - \vec{\xi}\|^2 \,\mathrm{d}\vec{\xi},
\end{equation}
the integral of the squared Riemannian distance. In optimal transport
terminology, this is the cost of a candidate map $F: \Omega_C \to \Omega_P$.

It is well-known that a unique solution exists for this problem; this
was established in \citet{brenier1991polar} for Euclidean space and in
\citet{mccann2001polar} for the sphere. In Euclidean space, the
appropriate map can be written in the form
\begin{equation}
  \label{eq:xiplusgradu}
  \vec{x} = F(\vec{\xi}) = \vec{\xi} + \nabla u,\quad
\end{equation}
for a suitable scalar `potential' $u$. The corresponding area ratio is
\begin{equation}
  \label{eq:detIplusH}
  r(\vec{\xi}) = \det(I + H(u)),
\end{equation}
where $H(u)$ denotes the \emph{Hessian} of $u$. In \cref{sec:otsphere},
we extend this to the sphere. Combining \cref{eq:equi} with
\cref{eq:detIplusH} then leads to a nonlinear partial differential
equation for $u$, a \emph{Monge--Ampère} equation. Such equations,
defined over general manifolds $M$, have been well-studied. Many results
are available on the formal regularity of the solutions, including
estimates for the various derivatives of the function $u$ in terms of
$m$ \citep{trudinger1984second,caffarelli1990interior,
wang1995counterexamples,gutierrez2001monge,delanoe2006gradient,
caffarelli2008regularity,loeper2009regularity,loeper2011regularity}. The
formal regularity properties of the resulting mesh $\tau_P$ can then be
determined from these estimates.

In a transient simulation, the monitor function will vary with time:
$m = m(\vec{x}, t)$. In contrast to some other r-adaptive techniques
(particularly \emph{velocity-based} moving mesh methods), our approach
to mesh generation is effectively quasi-static. The mapping
$\vec{x}(\vec{\xi}, t)$ is uniquely defined by the monitor function
$m(\vec{x}, t)$ at that moment; our meshes \emph{do not know their history}.
For this reason, the effect of time does not enter our analysis.

\subsection{Geometrical measures of mesh regularity}

A general mesh $\tau$ defined on a two-dimensional Riemannian manifold
$M$ is comprised of a set of nodes on $M$ connected together by edges,
defining a set of cells. In a well-behaved mesh, the nodes are regularly
spaced and the edges meet at carefully-controlled angles. The resulting
cells are therefore not too skew. The spherical meshes mentioned
previously have these desirable properties, as does the uniform mesh on
the plane. In an \emph{r}-adaptive context, such meshes are appropriate
for a computational mesh $\tau_C$. Under the action of the map $F$, the
physical mesh $\tau_P$ is typically less regular than $\tau_C$. In
particular, the equidistribution condition \cref{eq:equi} controls the
size of the mesh cells; if this varies, it leads to a degree of skewness
in the mesh. A general criticism of \emph{r}-adaptive meshes is that
they can lead to excessively skew meshes. It can, however, be shown that
the use of the optimal transport regularisation condition (which forces
$\tau_P$ to be ``as close as possible" to $\tau_C$) is beneficial for
controlling the degree of skewness \citep{delzanno2008optimal}.

Assume that we are constructing meshes on a two-dimensional manifold.
We can then define the \emph{local scaling} and \emph{local skewness} in
terms of the linearisation of the map $F$.
\begin{definition}
Let $F: \Omega_C \to \Omega_P$ have Jacobian $J$, and suppose that this
linear operator has leading singular values $\sigma_1, \sigma_2$.

The \emph{\textbf{local scaling}} $s$ is defined as
\begin{equation}
  \label{eq:locscale}
  s = \sigma_1 \sigma_2.
\end{equation}

The \emph{\textbf{local skewness}} $Q$ is defined as
\begin{equation}
  \label{eq:locskew}
  Q = \frac{1}{2} \left(\frac{\sigma_1}{\sigma_2} + \frac{\sigma_2}{\sigma_1}\right).
\end{equation}
\end{definition}

In \cref{thm:axireg}, we will obtain estimates for both of these
quantities for a certain class of meshes induced by axisymmetric monitor
functions.

This two-dimensional local skewness measure, $Q$, is equivalent to the
mesh quality measure $Q_{geo}$ defined in \citet[p.~205]{huang2011adaptive},
where more analysis is given. In Section 5.1 of that book, it is shown
how the interpolation error for a mesh, using different types of
interpolant, can be calculated directly in terms of the scaling and
skewness. Both need to be controlled to get a low overall error, but it
may well be that one can be large provided that the other is small.

If $F$ is the identity, so that the physical mesh is equal to the
computational mesh, then $s$ is constant and $Q = 1$. This is optimal
for $Q$, but may be suboptimal for $s$ if the underlying solution we are
trying to represent on the mesh has very small scales. For an adapted
mesh it is expected that both will vary. The local scaling $s$ is
controlled directly via the equidistribution condition \cref{eq:equi}.
In contrast, the local skewness $Q$ follows indirectly from properties
of the Monge--Ampère equation. We will consider using the local skewness
as a general measure of the quality of the mesh.

These general considerations lead to the following challenges for
adaptive mesh generation:

\textbf{Challenge 1} Derive a mesh which has minimal skewness $Q$ for a
given scaling distribution $s$.

\textbf{Challenge 2} Derive a mesh which leads to minimal solution
error, expressed as a combination of scale, skewness and other factors
such as alignment properties \citep{huang2011adaptive}.

Both questions are very hard to answer in general. However we will show
that the use of the optimal transport regularisation gives some partial
answers in terms of mesh generation, and that we can give a much more
complete analysis in the case of axisymmetric monitor functions.

We remark that on the plane $\mathbb{R}^2$, the use of optimal transport
techniques leads to $J$ being symmetric, and so the expressions above in
\cref{eq:locscale,eq:locskew} can be written with eigenvalues replacing
singular values. However, this is not true for a general manifold
embedded in Euclidean space.

Our measure of skewness above is based only on the local linearisation
of the map $F$. There are many other measures of mesh quality that are
calculated directly from the vertices and edges of the mesh, such as
those used in \citet{weller2016mesh}. On the other hand, our skewness
measure doesn't formally provide this geometric information; rather it
is a function of the map $F: \Omega_C \to \Omega_P$. The geometric
information of $\tau_P$ is only recovered when the computational mesh
$\tau_C$ is specified.

For example, if $\tau_C$ is made up of small square elements, these will
be mapped to small quadrilaterals, whose precise shape depends on how
the original square is aligned to the (orthogonal) eigenvectors/singular
vectors of $F'$. If we consider the ratio, $r$, of the lengths of the
two sides of the quadrilateral to be a measure of its skewness, $Q$ is
roughly the mean of $r$ taken over all possible alignments.

Alternative regularity measures, derived directly from the mesh,
consider quantities such as non-orthogonality of certain angles, and
mismatches between primal and dual grid components. Loosely speaking,
these measures are based on larger scale properties of the mesh, often
quantifying the extent to which neighbouring cells differ from each
other. This is equivalent to analysing higher-order spatial derivatives
in the expansion of $F$, which we do not perform in this paper.

\section{Optimally transported meshes on the sphere}
\label{sec:otsphere}

\subsection{The definition of an optimally transported mesh.}
\label{ssec:otdef}

There are various approaches for solving the Monge--Kantorovich problem
of constructing the optimal transport map that minimises the appropriate
cost function. Some approaches attack the mass transportation problem
directly, solving the problem ``from first principles''. An alternative
approach, which we describe here, reduces the problem to solving a
partial differential equation of Monge--Ampère type. This can be done
with fast algorithms which are amenable to analysis. It is shown in
\citet{mccann2001polar} that an optimal transport map on a manifold can
again be expressed in terms of the gradient of a scalar function. Let
$M$ be a general Riemannian
manifold and $u(\vec{\xi}): M \to \mathbb{R}$ be the scalar `potential'
generated by the optimal transport procedure. We then define the
\emph{McCann map} as an \emph{exponential map} $F: M \to M$:
\begin{equation}
  \label{eq:expmap}
  \vec{x} = F(\vec{\xi}) = e^{\nabla u} \vec{\xi}.
\end{equation}
The quantity $\nabla u(\vec{\xi})$ lies in the cotangent space at
$\vec{\xi}$, which can be trivially associated with the tangent space
$T_{\xi} M$. The exponential map maps this onto $M$ itself. Intuitively,
one selects the geodesic that passes through $\vec{\xi}$ and coincides
with $\nabla u$ there, then travels a distance $|\nabla u|$ along this
geodesic. The expression \cref{eq:expmap} reduces to the earlier
expression \cref{eq:xiplusgradu} if $M$ is some $\mathbb{R}^n$.

From optimal transport theory, the function $u$ automatically inherits a
convexity property: it is $c$-convex, as defined in
\citet{mccann2001polar}, where $c$ denotes the cost function used for
optimal transport. According to \citet{mccann2001polar} (see also
\citet{loeper2011regularity}), if the monitor function $m$ is
sufficiently smooth then so is the potential $u$, and so the map
\cref{eq:expmap} is well-defined and locally bijective. The McCann map
thus associates a well-defined map $F: M \to M$ with the scalar-valued
function $u(\xi)$. The map $F$ induces a well-defined area map $r(\vec{\xi})$,
which can be constructed in terms of $u$. For problems posed in
Euclidean space, as stated in \cref{eq:detIplusH}, $r(\vec{\xi})$ is given by
\begin{equation}
  r(\vec{\xi}) = \det(I + H(u)).
\end{equation}
Requiring that $m(\vec{x}) r(\vec{\xi})$ is constant over the mesh then
leads to a variant of the celebrated Monge--Ampère equation, a
fully-nonlinear second-order partial differential equation. If $M$ is
a more general manifold, the resulting expression for $r$ in terms of
$u$ involves some Monge--Ampère-like operator, conceptually
\begin{equation}
  \label{eq:chrisaug2}
  r = \operatorname{MA}(u).
\end{equation}
Setting $m(\vec{x}) r(\vec{\xi})$ to be constant then leads to an
equation of {\em Monge--Ampère-type}. In the case of the sphere $S^2$,
an explicit form of this equation is derived in \citet{mcrae2018optimal}
and is given in \cref{eq:mdetstuff}.

Conversely, suppose we use \cref{eq:UVratio} to associate an area map
$r(\xi)$ with a general map $F: M \to M$. \citet{mccann2001polar} showed
that if $r(\xi)$ is absolutely continuous with respect to the Lebesgue
measure on $M$---which is certainly true if the monitor function $m$ is
continuous and bounded away from zero---then there exists a unique
optimal transport map of the form \cref{eq:expmap} with this associated
area map.

Significantly, if $M = S^2$, the unit sphere, the geodesics are segments
of great circles, and the resulting exponential map in \cref{eq:expmap}
can be calculated easily. Indeed, it can be written in closed form as
\begin{equation}
\label{eq:rodri1}
  \vec{x} = \cos(\delta)\, \vec{\xi} + \sin(\delta)\frac{\nabla u}{|\nabla u|}, \quad \delta = |\nabla u|,
\end{equation}
or, equivalently,
\begin{equation}
\label{eq:rodri2}
  \vec{x} = \cos(\delta)\, \vec{\xi} + \frac{\sin(\delta)}{\delta} \nabla u, \quad \delta = |\nabla u|.
\end{equation}
This is a simple case of Rodrigues' rotation formula. The form
\cref{eq:rodri1} makes it clear that the destination $\vec{x}$ is an
appropriate combination of orthogonal unit vectors, while the
alternative form \cref{eq:rodri2} highlights the reduction to the planar
expression \cref{eq:xiplusgradu} in the small-$\delta$ limit. We use
these closed-form expressions for all our subsequent calculations on
$S^2$.

\subsection{A-priori estmates of the local and global regularity of optimally transported meshes}
\label{ssec:otreg}

In a series of papers (extending earlier work of, among others,
Pogorelov, Lions, Gilbarg, Trudinger, and Urbas in $\mathbb{R}^n$),
Loeper \citep{loeper2011regularity} and McCann \citep{mccann2001polar}
have derived estimates for the derivatives of the McCann map acting on
$S^2$. These can be used to help determine the mesh regularity through
\cref{eq:locskew}. The principal results from this work are summarised
as follows.
\begin{theorem}
\emph{(Trudinger et al.)} Suppose that $u$ satisfies a Monge--Ampère
equation in Euclidean space of the form
\begin{equation}
\label{eq:chrisaug3}
\det(I + H(u)) = g(\xi,\nabla u) \equiv 1/m(\xi,\nabla u).
\end{equation}
There then exists a constant $C$ that depends only on $g$, the domain
$\Omega_C$, and any boundary conditions, such that
\begin{equation}
\sup_{\Omega_C} |H(u)| \leq C.
\end{equation}
\end{theorem}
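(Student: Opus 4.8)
The plan is to recognise the equation as a Monge--Ampère equation in standard form and attack it with the classical second-derivative maximum-principle argument of Pogorelov, upgraded to a global bound by separate boundary estimates. First I would substitute $w = u + |\xi|^2/2$, so that $H(w) = I + H(u)$ and the equation becomes $\det H(w) = g(\xi,\nabla u)$. The $c$-convexity of $u$ guaranteed by optimal transport theory (as noted before \cref{eq:expmap}) ensures $I + H(u) > 0$, so $w$ is convex and the operator is uniformly elliptic once $H(w)$ is bounded above and below. The natural order of the estimates is then $C^0 \to C^1 \to C^2$: the maximum principle together with the prescribed boundary data controls $\sup|u|$; convexity and the $C^0$ bound control $\sup|\nabla u|$, which in turn confines $g(\xi,\nabla u)$ between two positive constants $0 < g_0 \le g \le g_1$. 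This reduces the problem to the genuinely hard step, the bound on $H(w)$.

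For the Hessian estimate I would differentiate the logarithmic form $\log\det H(w) = \log g$ twice in a fixed direction $e$. Writing $w^{ij}$ for the inverse of $w_{ij}$ and $L = w^{ij}\partial_{ij}$ for the linearised operator, one differentiation gives $L(w_e) = (\log g)_e$, and a second, using $(w^{ij})_e = -w^{is}w^{jt}w_{ste}$, gives
\begin{equation*}
  L(w_{ee}) = w^{is}w^{jt} w_{ije}\, w_{ste} + (\log g)_{ee} \ge (\log g)_{ee},
\end{equation*}
where the decisive nonnegative term reflects the concavity of $A \mapsto \log\det A$ on positive-definite matrices. I would then apply the maximum principle to an auxiliary function of the form $W = \log w_{ee} + \tfrac{\beta}{2}|\nabla w|^2$ (or, for an interior estimate, $W = \eta\, w_{ee}$ with a cutoff $\eta$ vanishing on $\partial\Omega_C$), choosing $\beta$ large enough that the favourable concavity term dominates the error terms produced when $L$ falls on the lower-order factors and on the right-hand side. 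Because $g$ depends on $\nabla u$, the quantity $(\log g)_{ee}$ itself contains second derivatives of $w$, of exactly the order being estimated; one must verify these are absorbable using the established $C^1$ bound and a Cauchy--Schwarz splitting of the third-order terms against the concavity term. Evaluating at an interior maximum of $W$ and invoking ellipticity then yields a pointwise bound on the largest eigenvalue of $H(w)$, hence on $|H(u)|$.

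The main obstacle is precisely this second-derivative step: the cubic third-derivative terms appearing after two differentiations are not individually controlled, and the estimate closes only if they are balanced against the single concavity term $w^{is}w^{jt}w_{ije}w_{ste}$ through a careful choice of auxiliary function and constants. A secondary difficulty, needed to pass from the interior Pogorelov estimate to the stated global bound $\sup_{\Omega_C}|H(u)| \le C$, is the boundary second-derivative estimate, requiring barriers adapted to the prescribed boundary conditions (or, if $\Omega_C$ is a closed manifold such as the torus or the sphere, this is vacuous and the interior argument applies globally). The attribution to Trudinger et al.\ and the references \citet{trudinger1984second,caffarelli1990interior} indicate that these barrier constructions and the continuity method completing the argument are standard, so I would cite them rather than reproduce the boundary analysis in full.
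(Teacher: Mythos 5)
The paper offers no proof of this theorem: it is stated purely as a summary of results quoted from the cited literature (Pogorelov, Trudinger, Urbas, Caffarelli, and others), so there is nothing in the text to compare your argument against step by step. Your sketch correctly reconstructs the standard route taken in those references --- the convexifying substitution $w = u + |\xi|^2/2$, the differential inequality $L(w_{ee}) \ge (\log g)_{ee}$ arising from concavity of $\log\det$ on positive-definite matrices, a Pogorelov-type auxiliary function whose first-order condition at an interior maximum converts the dangerous third-order terms (which enter through the dependence of $g$ on $\nabla u$, not merely second-order ones as you briefly state) into terms controllable by the $C^1$ bound, and boundary barriers or compactness to globalise --- and it defers exactly those technical steps that the paper itself defers entirely to the cited works.
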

This result is then extended by Loeper:
\begin{theorem}
\emph{(Loeper)} Suppose, analogously to \cref{eq:chrisaug3}, that $u$
is the solution of a problem of Monge--Ampère-type on the sphere,
arising from an optimal transport problem as defined in
\cref{eq:chrisaug2}. As long as $g > 0$ (\emph{i.e.}, $m > 0$), then
\begin{enumerate}
  \item if $g \in C^{1,1}$ then $u \in C^{3,\alpha}$, and
  \item if $g \in C^{\infty}$ then $u \in C^{\infty}$.
\end{enumerate}
\end{theorem}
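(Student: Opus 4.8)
The plan is to follow the now-standard programme for the regularity of optimal maps developed by Ma, Trudinger, Wang, and Loeper, adapting the Euclidean estimate quoted above (Trudinger et al.) to the spherical setting. First I would fix a local coordinate chart on $S^2$ and, using the $c$-convexity of $u$ guaranteed by \citet{mccann2001polar} for the cost $c(\vec\xi,\vec x) = \tfrac12 d(\vec\xi,\vec x)^2$, rewrite \cref{eq:chrisaug2} as a genuine Monge--Ampère equation of the form
\begin{equation}
  \det\!\big(H(u) - \mathcal{A}(\vec\xi,\nabla u)\big) = f(\vec\xi,\nabla u),
\end{equation}
where the correction matrix $\mathcal{A} = D^2_{\vec\xi\vec\xi}\, c$ encodes the geometry of the sphere and $f$ is built from $g$ and the reference density. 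The a priori ellipticity of this equation is equivalent to the matrix $H(u) - \mathcal{A}$ being positive definite, which is precisely the $c$-convexity condition. The argument then splits into two stages: an a priori second-derivative ($C^{1,1}$) estimate, and a bootstrap to higher regularity.

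The heart of the proof is the a priori bound $\sup_{S^2} |H(u)| \le C$, the spherical analogue of the Euclidean estimate stated above. I would obtain this by differentiating the equation twice in a fixed direction, producing a linearised equation satisfied by the second derivatives with coefficients given by the cofactor matrix of $H(u) - \mathcal{A}$. Applying the maximum principle to an auxiliary function --- for instance the largest eigenvalue of $H(u)$, exponentiated and augmented by lower-order barrier terms --- yields a differential inequality in which the only dangerous contribution is the fourth-order derivative of the cost contracted against the transport directions. This term is controlled precisely by the Ma--Trudinger--Wang tensor $\mathfrak{S}_c$; the crucial point, due to \citet{loeper2011regularity}, is that for the distance-squared cost the restriction of $\mathfrak{S}_c$ to the diagonal reduces, up to a positive factor, to the sectional curvature of the manifold. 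On $S^2$ this curvature equals $+1$, so the strong Ma--Trudinger--Wang condition holds with a strict sign, the dangerous term acquires the favourable sign, and the maximum principle closes to give the Hessian bound.

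With $H(u)$ bounded and bounded away from degeneracy, the equation is uniformly elliptic with bounded measurable coefficients. I would then invoke the Evans--Krylov interior estimate to promote the $C^{1,1}$ solution to $C^{2,\alpha}$, at which point the coefficients become Hölder continuous and classical Schauder estimates apply. A single Schauder step using $g \in C^{1,1}$ (so that $f \in C^{0,\alpha}$) yields $u \in C^{3,\alpha}$, establishing part (i); iterating the Schauder bootstrap with $g \in C^{\infty}$ gives $u \in C^{k,\alpha}$ for every $k$, and hence $u \in C^{\infty}$, establishing part (ii).

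The main obstacle is twofold. The technical core is the maximum-principle computation for the Hessian bound, where the many lower-order terms generated by the curved geometry must be absorbed into the barrier; this is exactly where the constant-curvature structure of $S^2$ is genuinely exploited. The more delicate conceptual issue is the cut locus: the squared-distance cost fails to be smooth at antipodal pairs, so the entire local machinery is valid only where the transport map keeps each point strictly away from its antipode, that is, where $|\nabla u|$ is bounded away from $\pi$. One must therefore first establish, via the stay-away estimates of \citet{loeper2011regularity}, that the optimal map avoids the cut locus before the smooth theory can be applied; in our mesh application this is automatic, since $\delta = |\nabla u|$ is small.
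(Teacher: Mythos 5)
First, be aware that the paper itself contains no proof of this statement: it is presented as a summary of known results, with the two-part regularity assertion taken from \citet{loeper2011regularity} (which in turn builds on Ma--Trudinger--Wang and on \citet{mccann2001polar}). Your proposal can therefore only be compared with the published argument, and in outline it reproduces that argument correctly: the reformulation as $\det\bigl(H(u) - \mathcal{A}(\vec{\xi},\nabla u)\bigr) = f(\vec{\xi},\nabla u)$ with ellipticity equivalent to $c$-convexity, a Pogorelov-type maximum-principle estimate for the Hessian in which the dangerous cost-curvature term must acquire a favourable sign, and then Evans--Krylov followed by a Schauder bootstrap, which correctly yields $C^{3,\alpha}$ from $g \in C^{1,1}$ and $C^{\infty}$ from $g \in C^{\infty}$.

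However, there is one genuine gap, and it sits exactly where the theorem's real content lies. You deduce the strong Ma--Trudinger--Wang condition on $S^2$ from the fact that the restriction of $\mathfrak{S}_c$ to the diagonal is the sectional curvature, which equals $+1$. That inference is false in general: positivity of the diagonal restriction (equivalently, of the curvature) is only a \emph{necessary} condition, and there exist positively curved Riemannian manifolds --- Kim's counterexamples --- on which the MTW condition fails away from the diagonal and optimal maps are not even continuous. The core technical achievement of \citet{loeper2011regularity} is precisely the explicit verification that, for the round sphere with quadratic cost, the full MTW tensor --- evaluated at all admissible pairs $(\vec{\xi},\vec{x})$ with $\vec{x}$ up to but excluding the cut locus of $\vec{\xi}$, not merely on the diagonal --- is bounded below by a positive constant. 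Without that global computation your maximum principle does not close. A secondary weakness: your closing remark that the cut-locus difficulty is ``automatic'' in the mesh application because $|\nabla u|$ is small cannot be invoked in a proof of the theorem as stated (and is not true for strong mesh compression); the stay-away-from-the-cut-locus estimate is itself one of Loeper's results and must be proved --- as you correctly note regarding the ordering --- before any of the local elliptic machinery applies.
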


These results are significant for mesh generation: the map
from the computational domain $\Omega_C$ to the physical domain
$\Omega_P$ is given by the exponential map of the gradient of $u$. The
smoothness of $u$ therefore implies smoothness of the map. We can deduce
that if $g \in C^{\infty}$, the map is a $C^{\infty}$-function of $\xi$.
Consequently, the singular values $\sigma_j$ of the Jacobian of the map
are bounded, differentiable functions of $\xi$ over $S^2$. It follows
further from the convexity properties that both $\sigma_1$ and
$\sigma_2$ are uniformly bounded away from zero. We deduce from this
that the skewness $Q$ of the mesh is a bounded, differentiable function
of $\xi$ over $S^2$. Intuitively, we expect that the mesh we generate
cannot become too skew.

It is shown further by Loeper that the formal regularity of the function $u$ is
slightly better on the sphere than for the plane. This is because the
\emph{cost-sectional curvature}---defined in \citet{loeper2011regularity}
and closely related to the usual curvature---is uniformly positive on
the sphere, but is zero on the plane. In fact, it is possible to get
certain formal regularity results on the sphere even if the source
measure vanishes. On the sphere we also avoid problems on the plane,
seen in \citet{budd2015geometry}, where the mesh loses regularity as one
approaches the boundary. In \cref{sec:planecomp}, we make a direct
comparison between the sphere and the plane.

\subsection{A local coordinate-based approach}
\label{ssec:otcoord}

It is helpful to see how the exponential map can be expressed in terms
of a \emph{local} two-dimensional coordinate basis mapping from
$\mathbb{R}^2$ to $S^2$. A natural basis to use is spherical angles
$(\theta, \phi)$ with respect to some (unit) axis $\vec{\omega}$. This
maps a coordinate patch $(\theta, \phi) \in \mathbb{R}^2$ directly onto
$S^2$. We can then consider---locally, at least---$u \equiv u(\theta,\phi)$.
To avoid singularities in the coordinate mapping, we will assume that we
are working in a region well-separated from the poles $\pm\vec{\omega}$.

In this local basis, we have
\begin{equation}
\vec{\xi} = (\sin\theta \cos\phi, \sin\theta \sin\phi, \cos\theta)^T,
\end{equation}
with local unit vectors
\begin{equation}
\vec{e}_\theta = (\cos\theta \cos\phi, \cos\theta \sin\phi, - \sin\theta)^T, \quad
\vec{e}_\phi = (-\sin\phi, \cos\phi,0)^T;
\end{equation}
these are orthogonal and also orthogonal to $\vec{\xi}$. We then have
\begin{equation}
\label{eq:gradusph}
\nabla u = u_\theta \vec{e}_\theta + \frac{u_\phi}{\sin\theta} \vec{e}_\phi,
\end{equation}
where, by assumption, we are working on a patch where $\sin\theta$ is
bounded away from zero. Note that, using the relation
$\cos\theta = \vec{\xi}\cdot\vec{\omega}$, we can express these vectors
as
\begin{equation}
\label{eq:unitvecs}
\vec{e}_\theta = \frac{\cos(\theta) \, \vec{\xi} - \vec{\omega}}{\sin\theta}, \quad
\vec{e}_\phi = \vec{\xi} \times \vec{e}_\theta = -\frac{\vec{\xi} \times \vec{\omega}}{\sin\theta}.
\end{equation}
We can then substitute \cref{eq:gradusph} into \cref{eq:rodri2} to find
the McCann map explicitly:
\begin{equation}
\label{eq:mccannsph}
\vec{x} = \cos(\delta)\, \vec{\xi} + \frac{\sin(\delta)}{\delta} \left(u_{\theta} \vec{e}_{\theta} + \frac{u_{\phi}}{\sin\theta} \vec{e}_{\phi} \right),
\quad \delta = \sqrt{u_{\theta}^2 + \left( \frac{u_{\phi}}{\sin\theta} \right)^2 }.
\end{equation}

The map \cref{eq:mccannsph} induces a map on the local coordinate space
from $(\theta, \phi) \to (\theta', \phi')$. It follows from standard
geometry that the area ratio is
\begin{equation}
\label{eq:arearatiosph}
r(\vec{\xi}) = \frac{\sin\theta'}{\sin\theta} \; |K|, \quad \mbox{where}\ K = \frac{\partial(\theta',\phi')}{\partial(\theta,\phi)},
\end{equation}
and $|K|$ is the determinant of this. After some manipulation, it
follows from \cref{eq:mccannsph} that
\begin{equation}
\label{eq:thetaprimeexprs}
\cos\theta' = \cos\delta \cos\theta - \frac{\sin(\delta)}{\delta} \sin(\theta) \, u_\theta, \quad
\sin(\phi' - \phi) = \frac{\sin(\delta) \, u_\phi}{\delta \sin\theta' \sin\theta}
\end{equation}
The scaling factor $r$ can then be derived by differentiating
\cref{eq:thetaprimeexprs} with respect to $\theta$ and $\phi$ and
applying \cref{eq:arearatiosph}. Specifying the value of $r$ leads to a
form of the Monge--Ampère equation.

\subsection{Summary}

The formulation described in \cref{ssec:otdef} has several advantages
for mesh generation. Firstly, we need only work with scalar
quantities---monitor functions $m$ and mesh potentials $u$---in order to
compute the map $\vec{x}(\vec{\xi})$. This leads directly to flexible
and robust methods for time-dependent mesh generation which are
relatively easy to implement. Secondly, there is a substantial
body of theory giving formal regularity estimates for solutions of the
optimal transportation problem, as mentioned in \cref{ssec:otreg}. This
can be exploited to give formal regularity properties for the map and
hence the mesh. Thirdly, provided the exponential map can be calculated
easily (as on the sphere), it gives a systematic and straightforward way
of finding a map $M \to M$ which can be used to calculate the mesh in a
natural manner. This avoids the many ad-hoc approaches to mesh
construction that can be found in the literature, which often involve
fine-tuning at a local level.

\section{Exact maps generated by axisymmetric monitor functions and their regularity}
\label{sec:axisym}

For general monitor functions, the optimal transport map cannot be
expressed analytically. However, it is possible to create exact
solutions for certain classes of meshes on the sphere by considering the
maps arising from axisymmetric monitor functions on the sphere. The
action of these maps on certain computational meshes $\tau_C$ can be
studied to generate physical meshes $\tau_P$ (which need not themselves
be axisymmetric). The purpose of doing this is two-fold. Firstly, the
regularity of the resulting physical mesh $\tau_P$ can be deduced
directly from this calculation. We can then obtain exact expressions for
the scaling and skewness of the resulting meshes. The second reason for
this study is that a number of the meshes so generated are appropriate
to be used with PDE problems on the sphere, such as some of those
described in \citet{slingo2009developing}. For example, it is easy to
generate smooth meshes which can resolve specific regions of the sphere,
which may be appropriate for fronts and cyclones.  We return to the case
of calculating more general meshes in \cref{sec:numnonaxi}.

\subsection{The basic geometry of the maps}

An axisymmetric function $u$ satisfies $u(\theta,\phi) \equiv u(\theta)$,
where the coordinates are defined with respect to some axis
$\vec{\omega}$. We then have $u_\phi = 0$, so
\begin{equation}
\label{eq:graduaxi}
  \nabla u = u_\theta \vec{e}_\theta,
\end{equation}
It then follows from \cref{eq:rodri1,eq:mccannsph} that
\begin{equation}
\label{eq:mccannaxi}
\vec{x} = \cos(\delta) \, \vec{\xi} + \frac{\sin(\delta)}{\delta} u_\theta \vec{e}_\theta, \quad\mbox{where}\ \delta = |u_\theta|.
\end{equation}
It follows immediately from \cref{eq:thetaprimeexprs} that if the local
coordinates for $\vec{\xi}$ are $(\theta, \phi)$, the new local
coordinates $(\theta', \phi')$ for $\vec{x}$ are given by
\begin{equation}
\label{eq:thetaprimeaxi}
\theta' = \theta +  \mathrm{d}u/\mathrm{d}\theta, \quad \phi' = \phi.
\end{equation}
Thus, from \cref{eq:arearatiosph}, the area scaling is given by
\begin{equation}
\label{eq:areaaxi}
r(\theta, \phi) = \frac{\sin\theta'}{\sin\theta} \dd{\theta'}{\theta} = \frac{\sin\theta'}{\sin\theta} (1 + u_{\theta \theta}).
\end{equation}

It is also useful to also consider the axisymmetric map in a
coordinate-free form. As before, let $\vec{\omega}$ be the (unit) axis
of symmetry. It follows from \cref{eq:unitvecs} that
\begin{equation}
\label{eq:nablauaxi}
\nabla u = \dd{u}{\theta} \, \frac{\cos(\theta) \, \vec{\xi} - \vec{\omega}}{\sin\theta} = \left( \theta' - \theta \right) \frac{\cos(\theta) \, \vec{\xi} - \vec{\omega}}{\sin\theta}.
\end{equation}
Combining the previous results, we have
\begin{equation}
\label{eq:mccannaxialt}
\vec{x} = \cos(\theta'-\theta) \, \vec{\xi} + \sin(\theta'-\theta) \, \frac{\cos(\theta) \, \vec{\xi} - \vec{\omega}}{\sin\theta},
\end{equation}
Using \cref{eq:thetaprimeaxi,eq:mccannaxialt}, we can generate a map
$\vec{x}(\vec{\xi})$ from the sphere to itself for any suitable
$u(\theta)$ and axis of symmetry $\vec{\omega}$.

\subsection{Calculation of \texorpdfstring{$\theta'$}{theta'} from a monitor function}

Consider an axisymmetric monitor function $m(\vec{x}) \equiv m(\theta')$,
where $\cos\theta' = \vec{x}\cdot\vec{\omega}$. We can use the results
of the previous subsection to calculate the map that equidistributes this
monitor function. Using \cref{eq:areaaxi}, the equidistribution
condition \cref{eq:equi} gives
\begin{equation}
\label{eq:equiaxi}
m(\theta') \sin\theta' \dd{\theta'}{\theta} = \alpha \sin\theta
\end{equation}
where $\alpha$ is a normalisation constant. The axis of symmetry must
map to itself, implying
\begin{equation}
\label{eq:axibcs}
\theta'(0) = 0, \quad \theta'(\pi) = \pi.
\end{equation}
Integrating \cref{eq:equiaxi}, we have
\begin{equation}
\label{eq:axiintegral}
F(\theta') \equiv \int_{0}^{\theta'} m(t) \sin t \,\mathrm{d}t = \alpha (1 - \cos\theta),
\end{equation}
where the normalisation constant $\alpha$ satisfies
\begin{equation}
\label{eq:axialpha}
2 \alpha = \int_{0}^{\pi} m(t) \sin t \,\mathrm{d}t.
\end{equation}

For particular monitor functions $m(\theta')$, we can use
\cref{eq:equiaxi,eq:axibcs,eq:axiintegral,eq:axialpha} to calculate
$\theta'$ directly from $\theta$ by inverting $F$. Applying
\cref{eq:mccannaxialt} then lets us calculate $\vec{x}(\vec{\xi})$
directly. We can apply this transformation to the vertices of some
reasonably uniform mesh $\tau_C$ to generate new meshes adapted to the
given monitor function.

\subsection{The local regularity of the map}
\label{ssec:localreg}

Two important mesh properties are its scale and regularity. Here, scale
refers to the sizes of its cells, per \cref{eq:locscale}, and regularity
to the skewness $Q$ of cells, per \cref{eq:locskew}. Both of these are
local measures of mesh quality, and relate to the resulting errors which
can be expected when solving PDEs on the mesh. For these axisymmetric
maps, we can obtain analytic expressions for the scale and skewness
quantities in terms of the monitor function.

Consider a local quadrilateral coordinate patch in $\mathbb{R}^2$
centred on $(\theta, \phi)$ and of sides $\delta \theta \times \delta \phi$.
This corresponds to a patch in $S^2$ of sides
$\delta x \times \delta y = \sin\theta\ \delta \phi \times \delta \theta$.
The patch in $\mathbb{R}^2$ is mapped to a patch centred on
$(\theta', \phi')$ of sides $\delta \theta' \times \delta \phi'$, which
corresponds to a patch on $S^2$ of sides
$\delta x' \times \delta y' = \sin\theta'\ \delta \phi' \times \delta \theta'$.
In the limit of the patch going to zero,
\begin{equation}
\dd{x'}{x} = \frac{\sin\theta'}{\sin\theta} \dd{\phi'}{\phi}, \quad
\dd{y'}{y} = \dd{\theta'}{\theta}.
\end{equation}
For an axisymmetric equidistribution map generated by the monitor
function $m(\theta')$, \cref{eq:equiaxi} implies
\begin{equation}
\dd{\phi'}{\phi} = 1, \quad
\dd{\theta'}{\theta} = \frac{\alpha}{m(\theta')} \frac{\sin\theta}{\sin\theta'}.
\end{equation}
Combining these, we have
\begin{equation}
\dd{x'}{x} = \frac{\sin\theta'}{\sin\theta}, \quad
\dd{y'}{y} = \frac{\alpha}{m(\theta')} \frac{\sin\theta}{\sin\theta'}.
\end{equation}
It follows that the Jacobian matrix, $J$, of the map from the local
patches of $S^2$ is diagonal and has eigenvalues or singular values
\begin{equation}
\label{eq:eigeqn}
\sigma_1 = \frac{\sin\theta'}{\sin\theta}, \quad \sigma_2 = \frac{\alpha}{m(\theta')} \frac{\sin\theta}{\sin\theta'}.
\end{equation}
From this, we can deduce expressions for the scaling and skewness of the
resulting map.

\begin{lemma}
\label{thm:axireg}
\emph{(i)} The local scaling of the map is given by
\begin{equation}
\label{eq:axiscale}
s = \sigma_1 \sigma_2 = \alpha/m.
\end{equation}
\emph{(ii)} The local skewness of the map is given by
\begin{equation}
\label{eq:Qaxi}
Q = \frac{1}{2} \left(\frac{\alpha}{m(\theta')} \frac{\sin^2\theta}{\sin^2\theta'} + \frac{m(\theta')}{\alpha}\frac{\sin^2\theta'}{\sin^2\theta} \right).
\end{equation}
\end{lemma}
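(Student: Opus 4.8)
The plan is to substitute the expressions for the singular values $\sigma_1, \sigma_2$ derived in \cref{eq:eigeqn} directly into the definitions of the local scaling \cref{eq:locscale} and local skewness \cref{eq:locskew}. Since the preceding calculation has already established that the Jacobian $J$ of the map between the local patches of $S^2$ is diagonal, with positive diagonal entries (as $m, \alpha > 0$ and both $\theta$ and $\theta'$ lie in $(0,\pi)$, where the coordinate patch is valid), these entries coincide with the singular values. No absolute values are needed, so the identification in \cref{eq:eigeqn} of eigenvalues with singular values is justified.

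For part (i), I would form the product $s = \sigma_1 \sigma_2$. The key observation is that the factor $\sin\theta'/\sin\theta$ appearing in $\sigma_1$ is cancelled exactly by the reciprocal factor $\sin\theta/\sin\theta'$ carried by $\sigma_2$, leaving only $\alpha/m(\theta')$. This recovers the equidistribution relation \cref{eq:equi} expressed through the local scaling: since $s = \sigma_1\sigma_2$ is precisely the area ratio $r$, the identity $s = \alpha/m$ is just $mr = \alpha$, as expected.

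For part (ii), I would compute the two ratios $\sigma_1/\sigma_2$ and $\sigma_2/\sigma_1$ separately. Each ratio squares the factor $\sin\theta'/\sin\theta$, since it appears in the numerator of one singular value and in the denominator of the other; this gives $\sigma_1/\sigma_2 = (m(\theta')/\alpha)(\sin^2\theta'/\sin^2\theta)$ together with its reciprocal. Averaging these two reciprocal quantities according to the definition \cref{eq:locskew} then yields the stated expression \cref{eq:Qaxi} immediately.

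I expect no genuine obstacle here: the result is a direct algebraic consequence of \cref{eq:eigeqn} combined with the two definitions, with all the geometric and analytic content already front-loaded into the derivation of the singular values. The only point requiring a moment's care is confirming that the diagonal entries of $J$ are the singular values rather than merely the eigenvalues, which holds because $J$ is diagonal with strictly positive entries and hence has a trivial singular value decomposition.
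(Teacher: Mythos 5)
Your proposal is correct and is essentially identical to the paper's own proof, which likewise obtains both results by substituting the singular values from \cref{eq:eigeqn} directly into the definitions \cref{eq:locscale} and \cref{eq:locskew}. The only addition is your explicit check that the diagonal entries of $J$ are positive and hence are the singular values, a point the paper leaves implicit.
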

\begin{proof}
Result \emph{(i)} follows directly from \cref{eq:eigeqn}, and is
consistent with the equidistribution condition \cref{eq:equi}.
Similarly, \emph{(ii)} follows from \cref{eq:eigeqn} and \cref{eq:locskew}.
\end{proof}

We now show that $Q \to 1$ at the poles, as long as $m$ is continuous.
This implies that $Q$ is close to unity for open regions around each
pole. Therefore, the resulting mesh is extremely regular in these
regions even if there is significant mesh contraction.

\begin{lemma}
\label{thm:poles}
If $m$ is continuous at the poles, $Q \to 1$ as $\theta' \to 0$ or
$\theta' \to \pi$.
\end{lemma}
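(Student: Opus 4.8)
The plan is to reduce everything to the single ratio $\rho := \sigma_2/\sigma_1$ of the two singular values in \cref{eq:eigeqn}. Since the skewness \cref{eq:Qaxi} has the form $Q = \tfrac12(\rho + \rho^{-1})$, it suffices to show $\rho \to 1$ at each pole. From \cref{eq:eigeqn},
\[
\rho = \frac{\sigma_2}{\sigma_1} = \frac{\alpha}{m(\theta')}\,\frac{\sin^2\theta}{\sin^2\theta'},
\]
so the whole question is the limiting behaviour of $\sin^2\theta/\sin^2\theta'$ as $\theta' \to 0$ (and hence $\theta \to 0$, since the map fixes the pole and $\theta'(\theta)$ is monotone).

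First I would extract the leading-order behaviour of the equidistribution integral \cref{eq:axiintegral} near $\theta' = 0$. Replacing $\sin t$ by $t$ and using continuity of $m$ to pull out $m(0)$ gives $\int_0^{\theta'} m(t)\sin t\,\mathrm{d}t = \tfrac12 m(0)\,\theta'^2 + o(\theta'^2)$, while the right-hand side expands as $\alpha(1-\cos\theta) = \tfrac12\alpha\,\theta^2 + o(\theta^2)$. Equating leading terms yields $\theta^2/\theta'^2 \to m(0)/\alpha$. Because $\sin^2\theta/\sin^2\theta' \sim \theta^2/\theta'^2$ as both angles vanish and $m(\theta') \to m(0)$ by continuity, substitution into the expression for $\rho$ gives $\rho \to (\alpha/m(0))(m(0)/\alpha) = 1$, hence $Q \to 1$.

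The pole $\theta' = \pi$ is treated identically under the substitution $\psi = \pi - \theta$, $\psi' = \pi - \theta'$. Writing \cref{eq:axiintegral} as $2\alpha - \int_{\pi-\psi'}^{\pi} m(t)\sin t\,\mathrm{d}t = \alpha(1 + \cos\psi)$ via \cref{eq:axialpha}, the tail integral is $\tfrac12 m(\pi)\,\psi'^2 + o(\psi'^2)$ and the right-hand side is $2\alpha - \tfrac12\alpha\,\psi^2 + o(\psi^2)$; the constant $2\alpha$ cancels and one recovers $\psi^2/\psi'^2 \to m(\pi)/\alpha$, so that $\rho \to 1$ exactly as before.

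The only step needing genuine care is justifying that continuity of $m$ at the pole upgrades the integral estimate to $\tfrac12 m(0)\theta'^2 + o(\theta'^2)$, rather than a mere $O(\theta'^2)$ bound that would not force $\rho \to 1$. This comes from a squeeze argument: given $\varepsilon > 0$, continuity gives $|m(t) - m(0)| < \varepsilon$ on a neighbourhood of the pole, bounding the error by $\tfrac12\varepsilon\,\theta'^2$, and similarly at $\theta' = \pi$. Confirming that $\theta \to 0$ (resp.\ $\theta \to \pi$) as $\theta' \to 0$ (resp.\ $\pi$) is the other ingredient; both are routine, and once they are in place the result is immediate from \cref{eq:Qaxi}.
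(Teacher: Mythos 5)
Your proof is correct, but it follows a genuinely different route from the paper's. The paper argues from the differential form of equidistribution \cref{eq:equiaxi}: it asserts, from regularity of the map, that $\dd{\theta'}{\theta}$ exists, observes that $\sin\theta'/\sin\theta \to \dd{\theta'}{\theta}$ as $\theta \to 0$ (the pole being a fixed point of the map), and substitutes this limit into the identity $\frac{m}{\alpha}\,\frac{\sin\theta'}{\sin\theta}\,\dd{\theta'}{\theta} = 1$ to conclude $\frac{m}{\alpha}\,\frac{\sin^2\theta'}{\sin^2\theta} \to 1$. You instead work from the integral form \cref{eq:axiintegral}, extracting the two-sided asymptotics $F(\theta') = \tfrac12 m(0)\theta'^2 + o(\theta'^2)$ and $\alpha(1-\cos\theta) = \tfrac12\alpha\theta^2 + o(\theta^2)$ and dividing; both arguments then finish identically, by noting that the singular-value ratio from \cref{eq:eigeqn} tends to $1$, so $Q \to 1$ by \cref{eq:Qaxi}. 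What your version buys is that it uses exactly the stated hypothesis, namely continuity of $m$ at the pole (plus the paper's standing positivity assumption $m > 0$, which you need when dividing and should state explicitly). The paper's appeal to the existence of $\dd{\theta'}{\theta}$ at the pole leans on the regularity theory of \cref{ssec:otreg}, which formally requires more smoothness of $m$ than continuity; moreover, away from the pole $\dd{\theta'}{\theta} = \alpha\sin\theta/(m\sin\theta')$, so deducing the derivative's limiting value at the pole from $\sin\theta'/\sin\theta$ alone risks circularity, and your Taylor-plus-squeeze argument sidesteps this entirely. What the paper's version buys is brevity: one line of algebra once existence of the derivative is granted. Your treatment of the second pole via $\psi = \pi - \theta$, with the constant $2\alpha$ from \cref{eq:axialpha} cancelling, is also correct and fills in what the paper leaves as ``a similar calculation''.
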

\begin{proof}
It follows from the regularity of the map that
$\mathrm{d}\theta'/\mathrm{d} \theta$ exists. Furthermore, since
$\theta' = 0$ when $\theta = 0$, it follows that
$\sin\theta'/\sin\theta \to \mathrm{d}\theta'/\mathrm{d}\theta$ as
$\theta \to 0$. Now, as in \cref{eq:equiaxi}, the equidistribution
condition leads to
\begin{equation}
m \sin\theta' \dd{\theta'}{\theta} = \alpha \sin\theta.
\end{equation}
It follows that, as $\theta \to 0$, we have
\begin{equation}
\frac{m}{\alpha} \frac{\sin^2\theta'}{\sin^2\theta} \to \frac{m}{\alpha} \frac{\sin\theta'}{\sin\theta} \dd{\theta'}{\theta} = 1.
\end{equation}
Thus $Q \to 1$. A similar calculation can be performed for
$\theta \to \pi$.
\end{proof}
We immediately see, stemming from the fact that $S^2$ is compact and the
result of \cref{thm:poles}, that controlling the variation of $Q$ is
easier than on the plane. The challenge of designing a mesh adaptation
strategy in the axisymmetric case can thus (at least locally) be
summarised as an ODE-constrained optimisation problem.

\textbf{Challenge 3} Determine a suitable monitor function $m(\theta')$,
satisfying \cref{eq:equiaxi}, so that with scaling $s$ and mesh skewness
$Q$ given by \cref{eq:axiscale} and \cref{eq:Qaxi}, we can obtain bounds
for the solution error.

A general answer to  Challenge 3 is difficult and we leave it as a
subject for future research.

\section{Examples of meshes generated by using axisymmetric monitor functions}
\label{sec:numaxi}

\subsection{Overview of the meshes generated}
In this section, we consider two examples of monitor functions which are
axisymmetric about an axis $\omega$, and the maps, and hence meshes,
that these induce. These monitor functions are chosen to give meshes
which can be analysed and are of potential practical importance for
meteorological applications. The examples will be (i) meshes which
concentrate points in localised regions, and (ii) meshes which
concentrate points in rings. (We note that very similar methods, with a
monitor function such as $m(\theta') = \gamma e^{-\gamma(1 - \cos\theta')} + 1/2$,
where $\gamma$ is assumed to be large, can be used to concentrate mesh
points close to a single point on $S^2$.)

For any axisymmetric monitor function $m(\theta')$, the formula
\cref{eq:axiintegral} can be used to evaluate the map $\theta'(\theta)$.
The desired meshes $\tau_P$ are then produced by applying the resulting
axisymmetric maps to various computational meshes $\tau_C$, such as
cubed-sphere and icosahedral meshes, with known topology and
connectivity. The definition of the skewness, as in
\cref{eq:locskew}, is a property of the \emph{map}, not of the meshes
themselves. However, as long as $\tau_C$ is reasonably uniform and has
regular angles, such as the examples considered above, large skewness
values will coincide with highly-skew cells in the adapted mesh
$\tau_P$, and low skewness with regular cells. We can hence estimate the
regularity of the resulting mesh.

\subsection{Meshes concentrating points into regions}

\subsubsection{Analytical construction of the meshes}

We firstly consider monitor functions which induce meshes that
concentrate more points uniformly into specified regions, such as a disc
centred on the axis $\vec{\omega}$. In a meteorological context, this
mesh could be used to represent a localised feature such as a moving
hurricane or a vortex patch, or a static feature such as a country
\citep{ringler2011exploring}. Ideally the algorithm for constructing such
a mesh will resolve the region in fine detail without making the mesh
in other regions too coarse or irregular, or introducing too much
skewness in the transition between the two regions. Using optimal
transport methods, we can produce and analyse such meshes. To achieve
this, we consider both a discontinuous ``top-hat'' monitor function, for
which we can express the map analytically, and a smoothed version of
this.

The \emph{top-hat} monitor function is given by the expression
\begin{equation}
\label{eq:mtophat}
m(\theta') =
\begin{cases}
\rho_1, &\theta' < \Theta'\\
\rho_2, &\theta' > \Theta',
\end{cases}
\end{equation}
where $\Theta'$ marks the boundary between high- and low-resolution
regions. The ratio $\gamma = \rho_2/\rho_1$ sets the ratio of mesh
density between the two regions. We will assume that $\gamma < 1$,
indeed for meshes with high compression, we expect that $\gamma$ will be
small. The top-hat function is discontinuous, so formally we have less
regularity than discussed in \cref{ssec:otreg}, but the resulting
optimal transport map is still continuous. Integrating \cref{eq:equiaxi}
from the `boundaries' 0 and $\pi$, we have
\begin{align}
\label{eq:tophatreg1}
\rho_1 (1 - \cos\theta') = \alpha (1 - \cos\theta), \quad \theta' < \Theta', \\
\label{eq:tophatreg2}
\quad \rho_2 (1 + \cos\theta') = \alpha (1 + \cos\theta), \quad \theta' > \Theta'.
\end{align}
It follows immediately from \cref{eq:axialpha} that the normalisation
constant $\alpha$ satisfies
\begin{equation}
2 \alpha = \rho_1 (1 - \cos\Theta') + \rho_2 (1 + \cos\Theta').
\end{equation}
Finally, define $\Theta$ to be the preimage of $\Theta'$ under the map.
By invoking continuity of the mesh, it follows that
\begin{equation}
\frac{\rho_1 (1 - \cos\Theta')}{1 - \cos\Theta} = \frac{\rho_2 (1 + \cos\Theta')}{1 + \cos\Theta}.
\end{equation}
Using standard trigonometrical identities, this can be written
\begin{equation}
\label{eq:tophatbdry}
\rho_1 \tan^2(\Theta'/2) = \rho_2 \tan^2(\Theta/2).
\end{equation}

\begin{figure}[!tb]
  \centering
  \includegraphics[width=0.6\columnwidth]{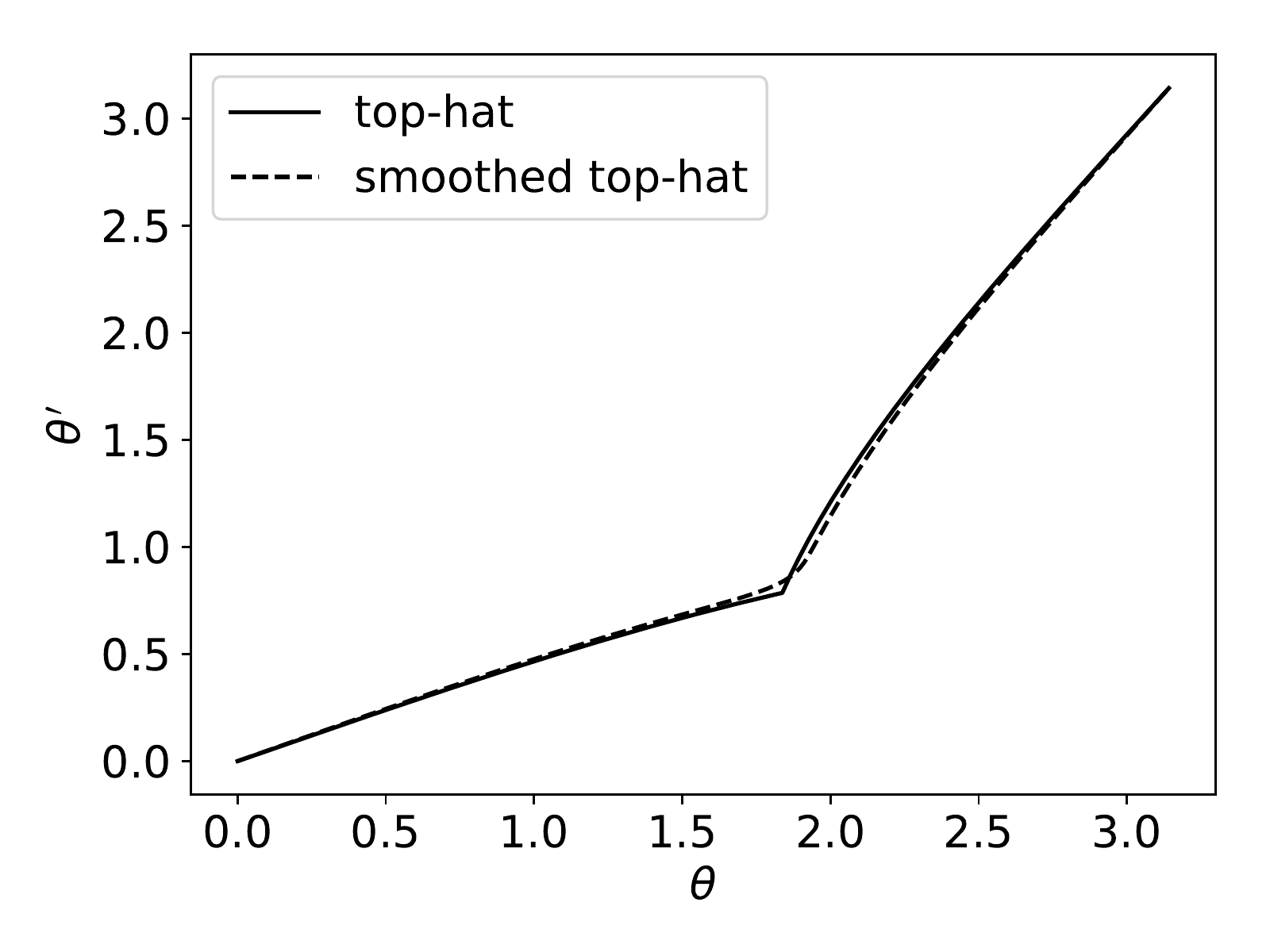}
  \caption{The map $\theta'(\theta)$ produced by the top-hat monitor
function \cref{eq:mtophat}, with $\rho_1/\rho_2 = 10$ and
${\Theta' = \pi/4}$, and by a smoothed approximation \cref{eq:mringler},
with ${\epsilon = \pi/50}$. The top-hat monitor function has a
discontinuity in $m$, which leads to a visible discontinuity in
$\dd{\theta'}{\theta}$, per \cref{eq:equiaxi}. In the smoothed top-hat,
the transition occurs over a distance $\mathcal{O}(\epsilon)$.}
\label{fig:m_tophat}
\end{figure}

Given a monitor function of the form \cref{eq:mtophat}, we can find
$\Theta$ from $\Theta'$ via \cref{eq:tophatbdry}. We then apply
\cref{eq:tophatreg1,eq:tophatreg2} to find $\theta'$ as a function of
$\theta$ in the separate ranges $\theta < \Theta$ and $\theta > \Theta$.
Having determined $\theta'$, we can calculate the image point $\vec{x}$
for a given $\vec{\xi}$ by using \cref{eq:mccannaxialt}.
For example, if $\rho_1 = 10, \rho_2 = 1$ and $\Theta' = \pi/4$ then
\begin{equation}
\alpha = 2.318\ldots , \quad \Theta = 1.837\ldots
\end{equation}
The resulting map $\theta \to \theta'$ is given in \cref{fig:m_tophat}.

A smoothed form of the top-hat monitor function was proposed by
\citet{ringler2011exploring} in the context of a global weather
forecasting model with increased resolution over the United States. In
that paper, a mesh generation algorithm using Lloyd's algorithm was
considered. The same monitor function was also used as a test problem in
the recent paper \citet{weller2016mesh} \footnote{The expression in the
original paper is incorrect; a correct version is given in our recent
paper \citet{mcrae2018optimal}, and is used here.}. This smoother
monitor function takes the form
\begin{equation}
\label{eq:mringler}
m(\theta') = \sqrt{\frac{1 - \gamma^2}{2} \left( \tanh \left( \frac{\Theta' - \theta'}{\epsilon} \right) + 1 \right) + \gamma^2},
\end{equation}
where we assume $\epsilon$ and $\gamma$ are small. For the region
$\theta' < \Theta'$ we have $m(\theta') \approx 1 = \rho_1$, while for
$\theta' > \Theta'$, $m(\theta') \approx \gamma = \rho_2$. This monitor
function therefore has a similar profile to the top-hat monitor function
\cref{eq:mtophat}, but with a smooth transition over a lengthscale
$\epsilon$. There is no closed-form integral of $m(\theta')\sin(\theta')$,
so we cannot use \cref{eq:axiintegral} to get a closed-form expression
for $\theta'(\theta)$. However, we can use numerical quadrature to
obtain an arbitrarily good approximation. We expect that it will have
similar behaviour to the top-hat monitor function if the parameters are
chosen carefully. For comparison, we present a calculation for the
smoothed monitor function, taking $\gamma = 1/10$, $\Theta' = \pi/4$,
and $\epsilon = \pi/50$, and the resulting map $\theta \to \theta'$ is
given in \cref{fig:m_tophat}.

We observe that the general piecewise-constant monitor function
\begin{equation}
\label{eq:piececonst}
m(\theta') = \rho_i, \quad \theta_i < \theta' < \theta_{i+1}, \quad i = 1, \ldots, N,
\end{equation}
or smoother versions of this, can be used to concentrate points in
annular regions, such as close to the equator or in the tropical zones.
(Examples of meshes which concentrate points in equatorial regions
are given in \citet{iga2017equatorially}).
The calculations (and indeed the resulting mesh regularity) for the
monitor function \cref{eq:piececonst} are very similar to those for the
top-hat monitor function \cref{eq:mtophat}.

\subsubsection{Analytical estimates of the regularity of the regional meshes}

We now study the regularity of the resulting maps by calculating the
skewness function $Q$. We first consider the map induced by the top-hat
monitor function \cref{eq:mtophat}. It follows from \cref{eq:Qaxi} that
the local skewness of the map is given by
\begin{equation}
Q = \frac{1}{2} \left(\frac{\alpha}{\rho_i} \frac{\sin^2\theta}{\sin^2\theta'} + \frac{\rho_i}{\alpha}\frac{\sin^2\theta'}{\sin^2\theta} \right)
\end{equation}
in each region $i=1,2$. In \cref{fig:Q_tophat}, we plot $Q$ as a
function of $\theta'$ for the case studied previously, where we have
large mesh compression with $\rho_1/\rho_2 = 10$, and $\Theta' = \pi/4$.
We see that $Q$ takes its largest value just
outside the ``top-hat region", \emph{i.e.}, at $\theta' = \Theta'_+$. We
therefore expect to see the most significant mesh distortion in this
region, as was also observed in \citet{weller2016mesh}. The value of
$Q_\mathrm{max} \approx 2.273$, implies the resulting mesh will have
some \emph{moderately} skew cells. However, this is in the context of
significant mesh compression and thus greatly enhanced resolution of the
underlying solution in the inner region.

When we consider the smoother top-hat monitor function
\cref{eq:mringler}, with the same parameters as before, the resulting
skewness factor is smaller and is plotted in \cref{fig:Q_tophat}. The
skewness $Q$ now takes its maximum value $Q_{max} \approx 1.6$ in the
outer part of the transition region. We observe that $Q = 1$ in the
middle of the transition region, implying that the mesh is very regular
there. This can be explained by continuity: approaching the transition
from inside, the cells are stretched in one direction (zonally), but are
stretched in the other direction (meridionally) when approaching from
outside. By continuity, there must be some intermediate value of
$\theta'$ where the cells are stretched equally in both directions, so
that $Q = 1$.

\begin{figure}[!tb]
  \centering
  \includegraphics[width=0.6\columnwidth]{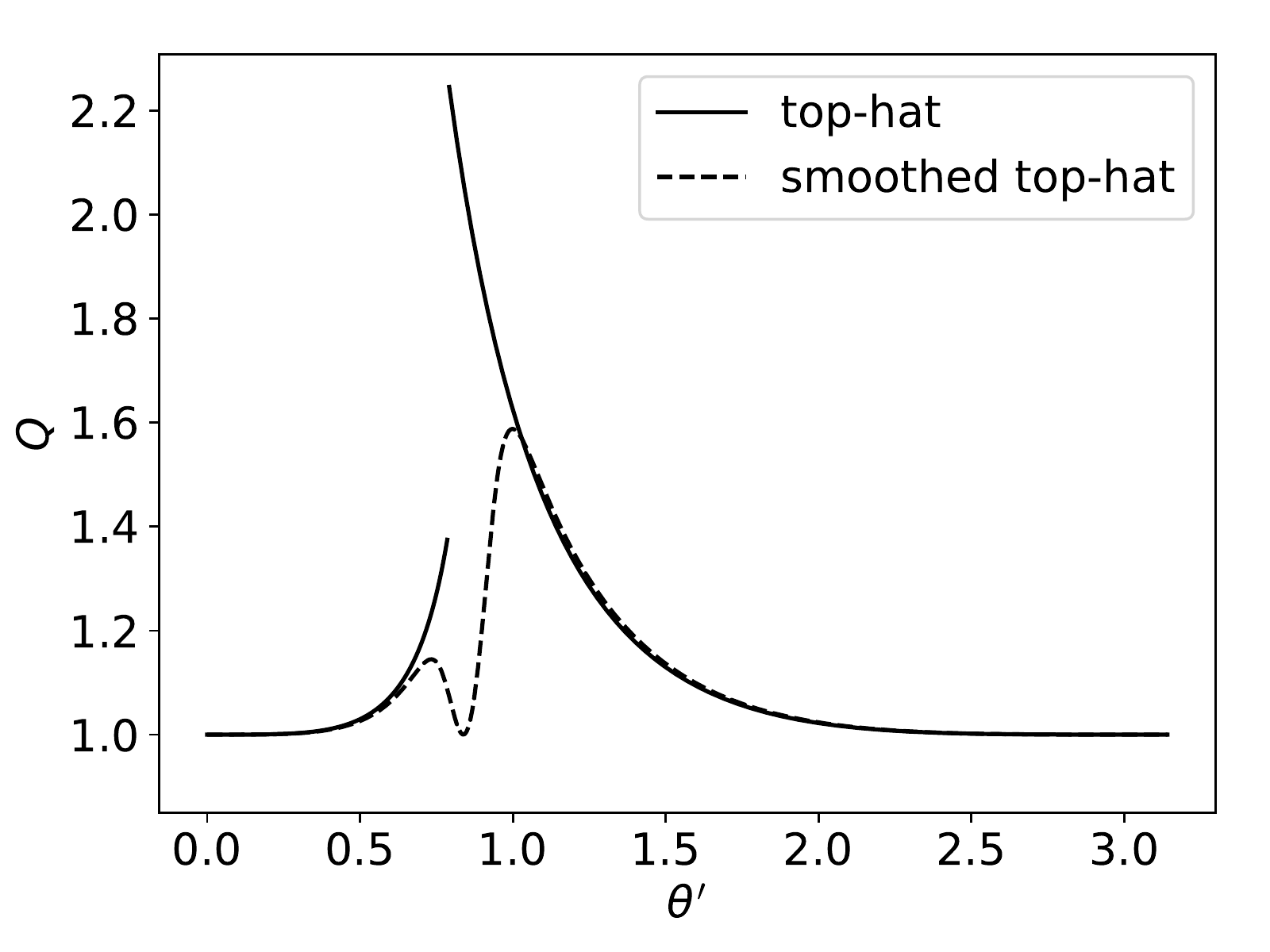}
  \caption{The skewness functions for the maps generated by the top-hat
and smoothed top-hat monitor functions. For the top-hat monitor
function, $Q$ is maximised when approaching the transition from the
outside; it can be shown analytically that the resulting
$Q_\mathrm{max} \approx 2.273$. For the smoothed approximation,
$Q_\mathrm{max}$ is notably smaller, despite the maps in
\cref{fig:m_tophat} being very similar. Perhaps surprisingly, $Q$
returns to 1 in the interior of the transition region; this can be
attributed to continuity. For both monitor functions, as expected,
$Q \to 1$ at the poles.}
\label{fig:Q_tophat}
\end{figure}

\subsubsection{The resulting regional meshes}

In this and in the next subsection, the adapted meshes are generated as
follows. For some vertex of $\tau_C$, located at $\vec{\xi} \in S^2$, we
calculate the corresponding value of $\theta$ from the expression
$\cos\theta = \vec{\xi}\cdot\vec{\omega}$. Applying the axisymmetric map gives
the value of $\theta'$ for the image point $\vec{x}$. We can then use
the expression \cref{eq:mccannaxialt} to determine $\vec{x}$ explicitly
-- it is on the same great circle as $\vec{\omega}$ and $\vec{\xi}$, and
is at an angle $\theta'$ from $\vec{\omega}$ (on the same ``side'' as
$\vec{\xi}$). This is performed for each vertex of $\tau_C$; the image
vertices form the adapted mesh $\tau_P$, which has the same connectivity
as $\tau_C$.

We use this method to look at the action of the resulting maps on three
types of meshes: a cubed-sphere mesh, an icosahedral mesh, and a
latitude--longitude mesh. We also calculate the resulting skewness for
the top-hat and smoothed top-hat monitor functions. For ease of
visualisation, the axis of rotational symmetry, $\omega$, is
always taken to be proportional to ${(0.7, -1.0, 2.0)^T}$, and the
meshes are viewed from the negative-$y$ direction. This allows the mesh
behaviour around the pole $\omega$, and in any ``inner region", to be
seen clearly. The mesh behaviour around the opposite pole, $-\omega$,
cannot be seen directly, but (in all cases that we use) it is visible
that the mesh is becoming increasingly regular towards the pole. This is
consistent with \cref{thm:poles}. The computational meshes each have
some rotational symmetry about the $z$-axis ${(0.0, 0.0, 1.0)^T}$.
However, they have no symmetry about $\omega$, the symmetry axis of the
monitor function. This is done deliberately, in order to illustrate the
more general behaviour.

\Cref{fig:th-cs-meshes} shows the effect of the resulting maps on a
cubed-sphere mesh, made up of six patches of `squares', while
\cref{fig:th-ic-meshes} shows the same for an icosahedral mesh, formed
of triangles. In each case, the high concentration of mesh points, and
resulting mesh compression, in the inner region is clear. We also see,
as predicted from \cref{thm:poles}, that there is good regularity of the
meshes near both poles, which follows from $Q$ approaching unity there.
The meshes produced by the top-hat monitor function have a visible sharp
transition at $\theta' = \Theta'$; however this transition is rather
smoother for the tanh-based monitor function. As predicted, there is a
narrow band of `regular' cells in the transition region, where
$Q \approx 1$, and the skewness of the mesh is modest elsewhere. In
general, these meshes are suitable for a computation; if desired, the
transition could be further smoothed by increasing $\epsilon$.
We also consider the map applied to a latitude--longitude mesh, which
already has a large variation in cell-size. The resulting mesh is shown
in \cref{fig:th-ll-meshes}. This no longer has orthogonal grid lines,
one of the main advantages of the latitude--longitude mesh, while the
disadvantage of large variations in cell-size is still present. This
mesh is unlikely to be useful for numerical weather prediction
calculations, and the cubed-sphere or icosahedral meshes are probably
far more suitable.

\begin{figure}[!tb]
  \centering
  \includegraphics[width=0.45\columnwidth]{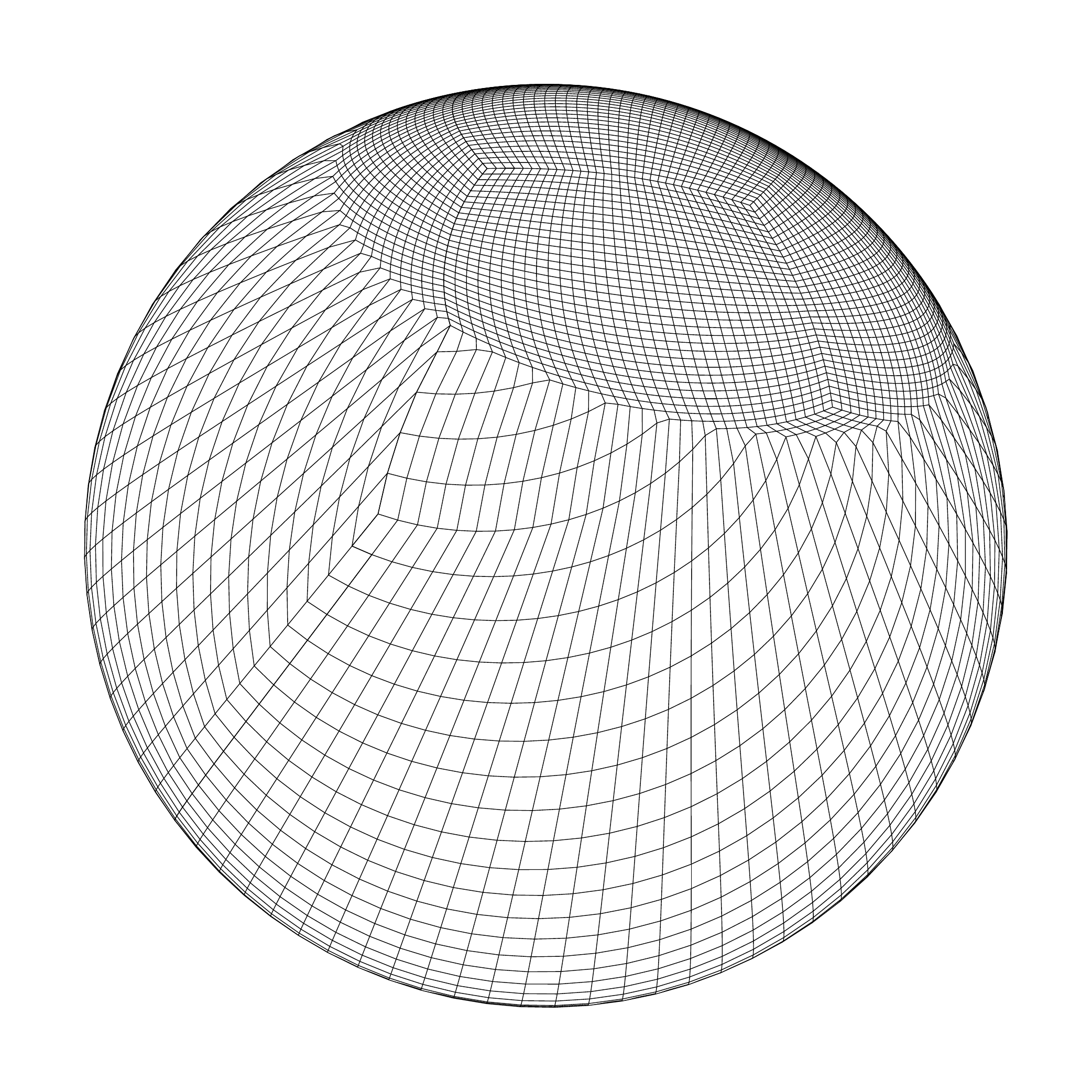}
  \includegraphics[width=0.45\columnwidth]{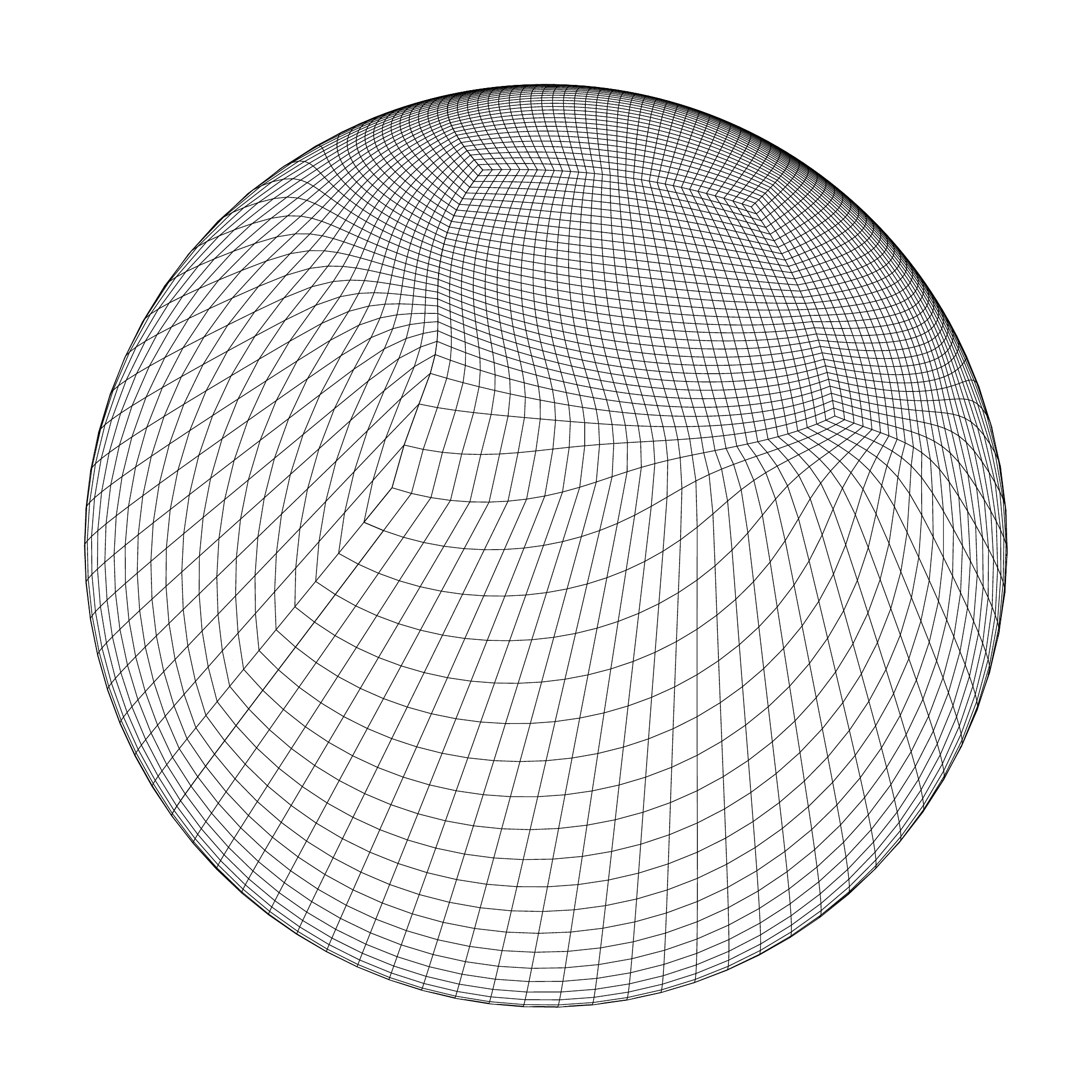}
  \caption{A cubed-sphere mesh, adapted to the top-hat and smoothed
top-hat monitor functions.}
\label{fig:th-cs-meshes}
\end{figure}

\begin{figure}[!tb]
  \centering
  \includegraphics[width=0.45\columnwidth]{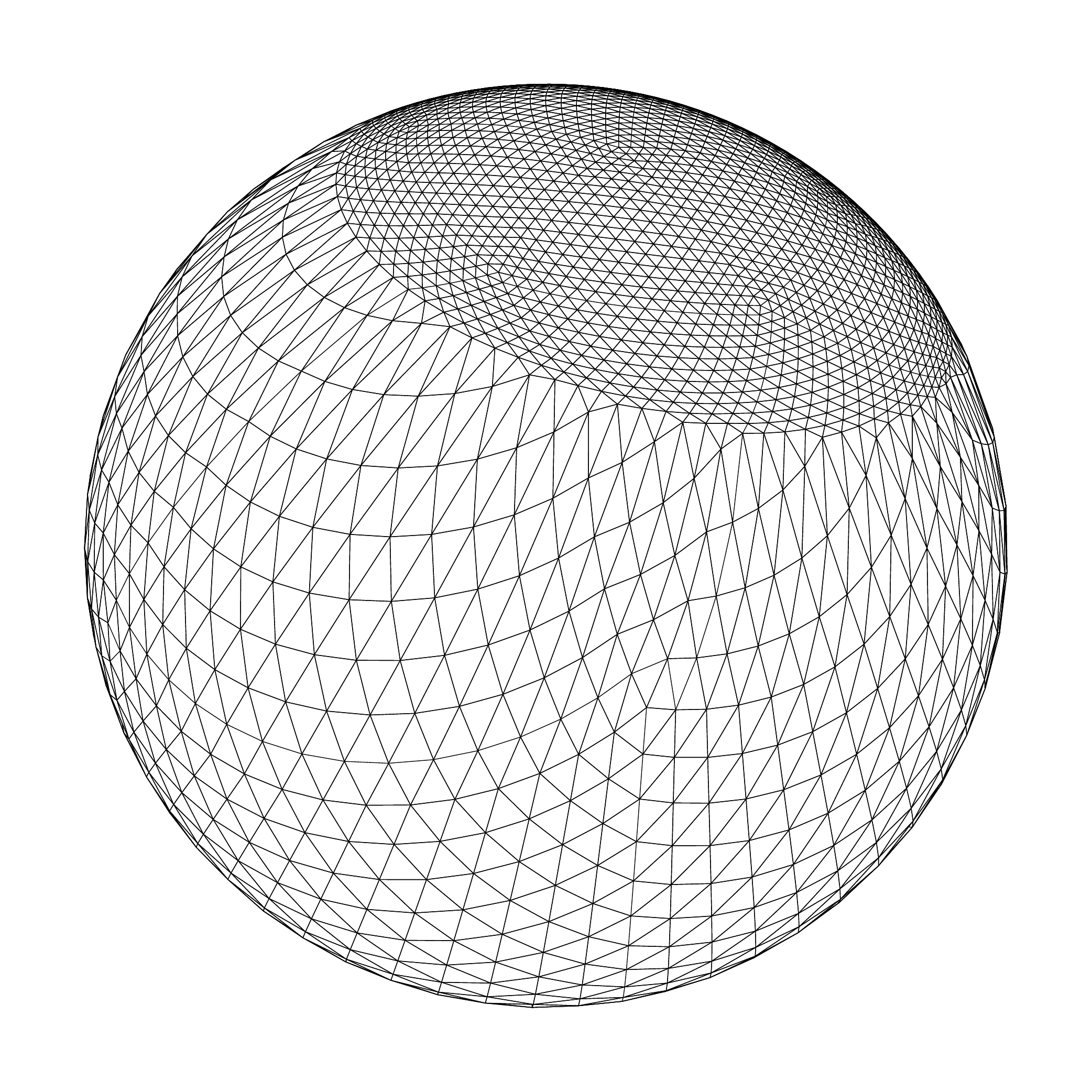}
  \includegraphics[width=0.45\columnwidth]{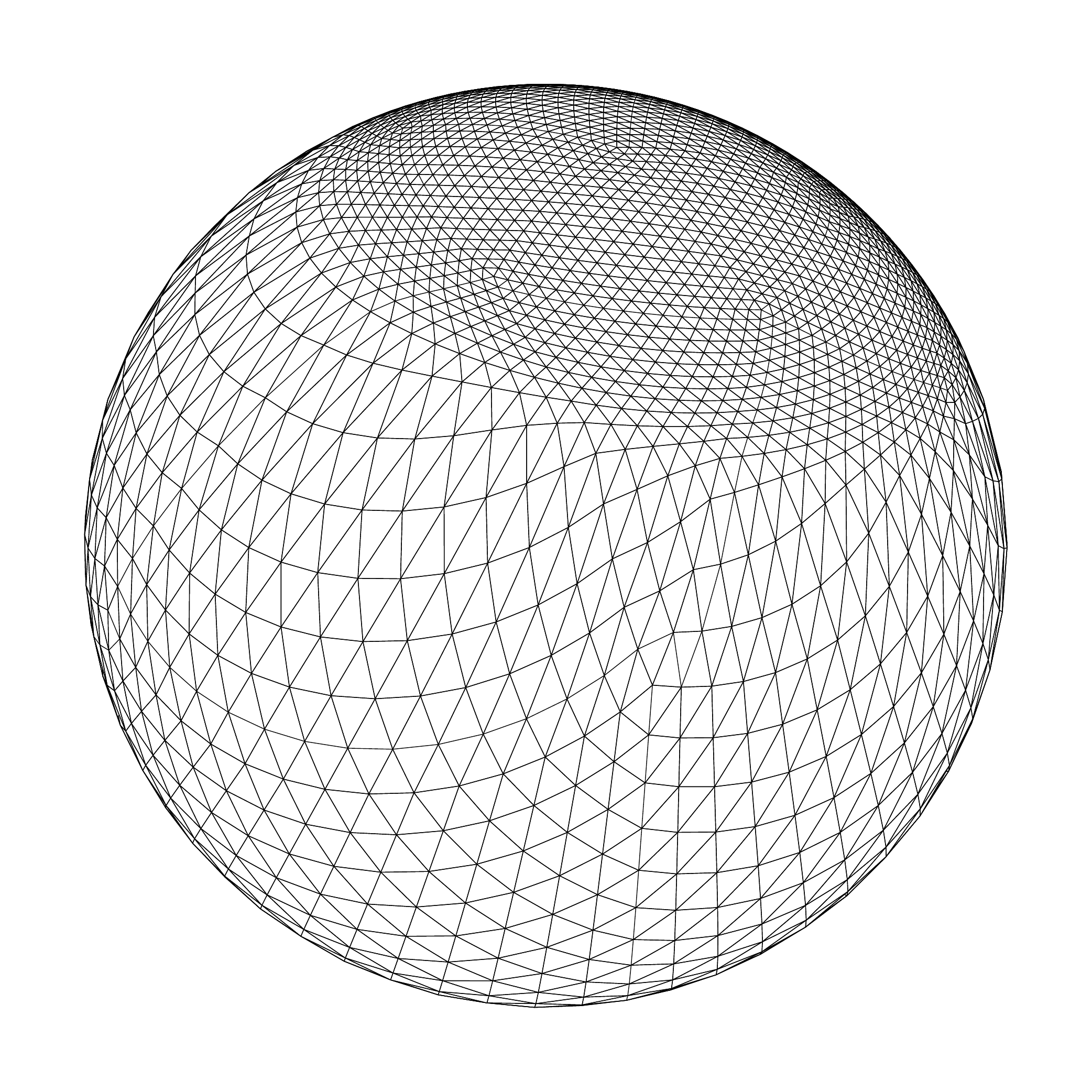}
  \caption{An icosahedral mesh, adapted to the top-hat and smoothed
top-hat monitor functions.}
\label{fig:th-ic-meshes}
\end{figure}

\begin{figure}[!tb]
  \centering
  \includegraphics[width=0.45\columnwidth]{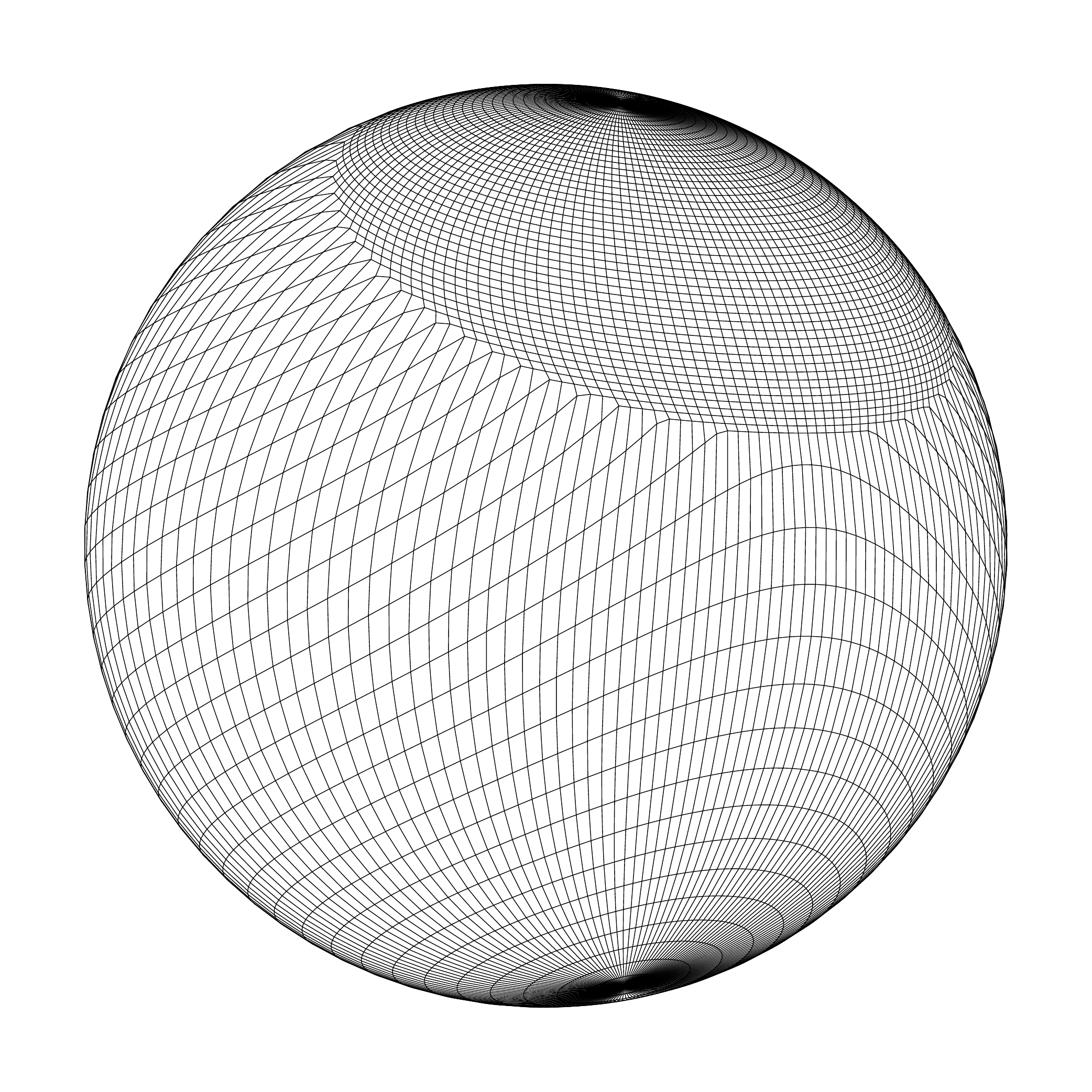}
  \includegraphics[width=0.45\columnwidth]{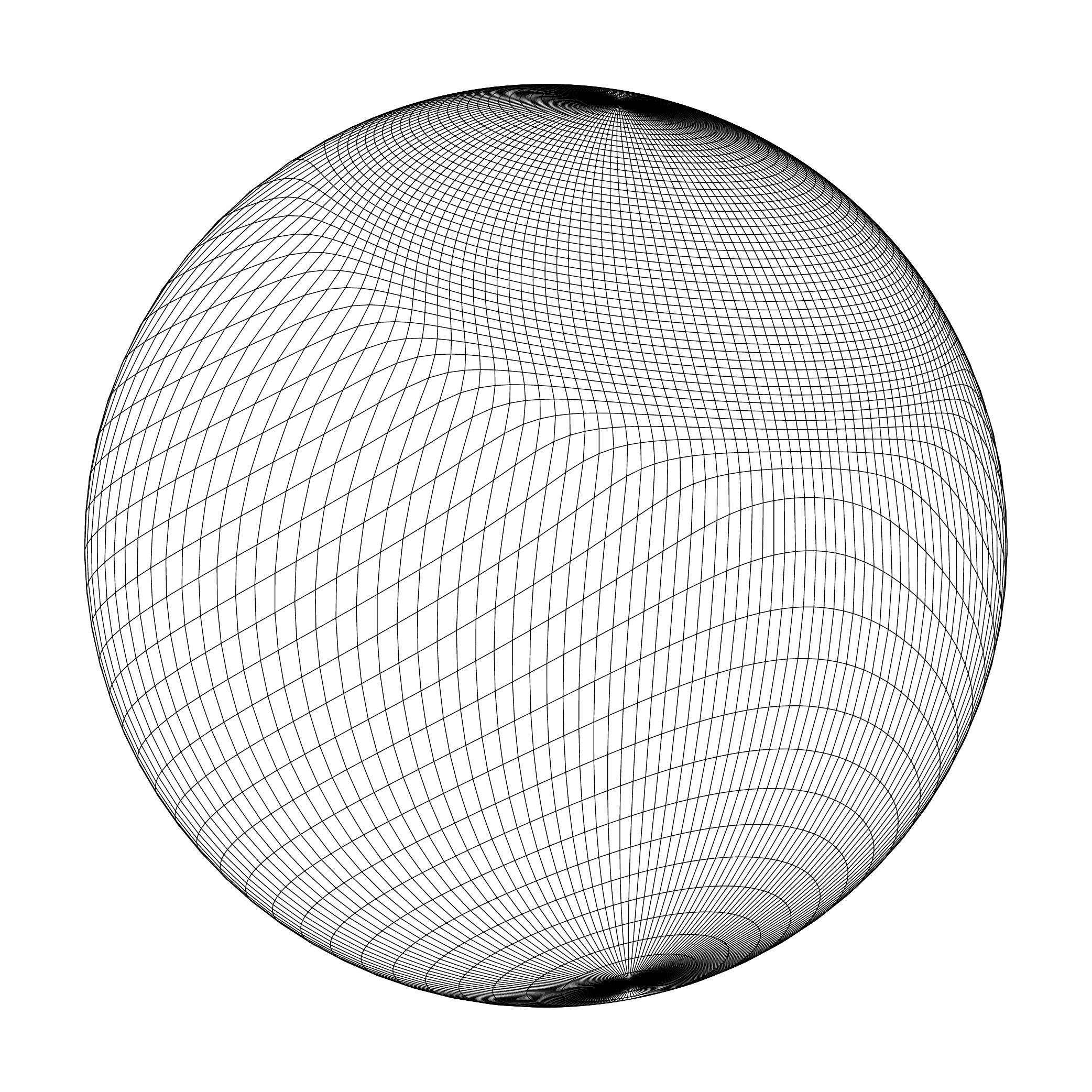}
  \caption{A latitude--longitude mesh, adapted to the top-hat and
smoothed top-hat monitor functions.}
\label{fig:th-ll-meshes}
\end{figure}

\subsection{Meshes concentrating points in rings}

\subsubsection{Analytical construction of the meshes}

Ring-like structures, or, more generally, solutions of PDEs with
features strongly aligned in a certain direction, arise naturally in
many applications. These include certain types of laser-driven optical
phenomena \citep{fibich2015nonlinear}, cross sections of bubbles, and
also in low-amplitude Rossby waves of the form seen in
\citet{slingo2009developing}. A ring can also be regarded as an extended
form of a locally anisotropic and strongly aligned feature such as a
weather front. Understanding the skewness and alignment properties of an
analytically-generated mesh for a ring gives us insight into the
corresponding features of a numerically generated mesh used to follow
such a front, see for example \citet{budd2013monge}. It is known that,
in such cases, it is vital for representing the solution accurately that
the mesh is aligned with the anisotropic feature, even if this comes at
the expense of a certain amount of mesh skewness
\citep{huang2011adaptive}.

We can consider such a feature to be one in which information is
concentrated in a neighbourhood of width $\mathcal{O}(\epsilon) \ll 1$
of a circle at an angle $\Theta'$ from the axis $\vec{\omega}$. A
monitor function leading to meshes which concentrate points in such a
ring is given by
\begin{equation}
\label{eq:mring}
m(\theta') = 1 + \frac{\beta}{\epsilon} \sech^2\left(\frac{\theta'^2 - \Theta'^2}{\epsilon}\right).
\end{equation}
The parameter $\beta$ in this expression controls the density of mesh
points in the ring. In the limit of $\epsilon \to 0$, $m$ approximates
an expression involving a delta function given by
\begin{equation}
\label{eq:mdelta}
m(\theta') = 1 + \lambda \; \delta(\theta' - \Theta').
\end{equation}
Here, $\lambda$ controls the density of mesh points in the ring, and
comparison with \cref{eq:mring}, ${\lambda = \beta /\Theta'}$ to leading
order. The analysis of the monitor function \cref{eq:mdelta} is
more straightforward than that of \cref{eq:mring} and gives a
leading-order expression for the mesh generated by \cref{eq:mring},
although the resulting mesh transformation for \cref{eq:mdelta} (or,
rather, its inverse) is discontinuous. By \cref{eq:axialpha}, we have
\begin{equation}
\label{eq:deltaalpha}
\alpha = 1 + \frac{\lambda}{2} \sin\Theta'.
\end{equation}

Integrating from 0 and from $\pi$, we then have
\begin{align}
\label{eq:deltareg1}
1 - \cos\theta' = \alpha (1 - \cos\theta), \quad \theta' < \Theta', \\
\label{eq:deltareg2}
1 + \cos\theta' = \alpha (1 + \cos\theta), \quad \theta' > \Theta'.
\end{align}
The monitor function \cref{eq:mdelta} leads to a jump in the value of the
computational coordinate $\theta$ when the physical coordinate satisfies
$\theta' = \Theta'$. If $\theta_1$ and $\theta_2$ are mapped to the
inner and outer edge of the ring, so that $\theta_1 \to \Theta'_-$ and
$\theta_2 \to \Theta'_+$, then \cref{eq:deltareg1,eq:deltareg2} give
\begin{equation}
\cos\theta_1 = 1 - \frac{1 - \cos(\Theta')}{\alpha}, \quad
\cos\theta_2 = \frac{1 + \cos(\Theta')}{\alpha} - 1
\end{equation}
For example, if we take $\Theta' = \pi/4$ and $\gamma = 5$, then
$\theta_1 = 0.464\ldots$, $\theta_2 = 1.964\ldots$, $\alpha = 2.768\ldots$,
and the resulting map is presented in \cref{fig:m_ring}.

Returning to the smoother monitor function \cref{eq:mring}, we take
$\Theta' = \pi/4$, $\beta = 5\pi/4$ -- compatible with the value of
$\lambda$ used in the delta function example -- and $\epsilon = \pi/50$.
The relevant integrals are evaluated with numerical quadrature, and the
resulting map is shown in \cref{fig:m_ring}.

\begin{figure}[!tb]
  \centering
  \includegraphics[width=0.6\columnwidth]{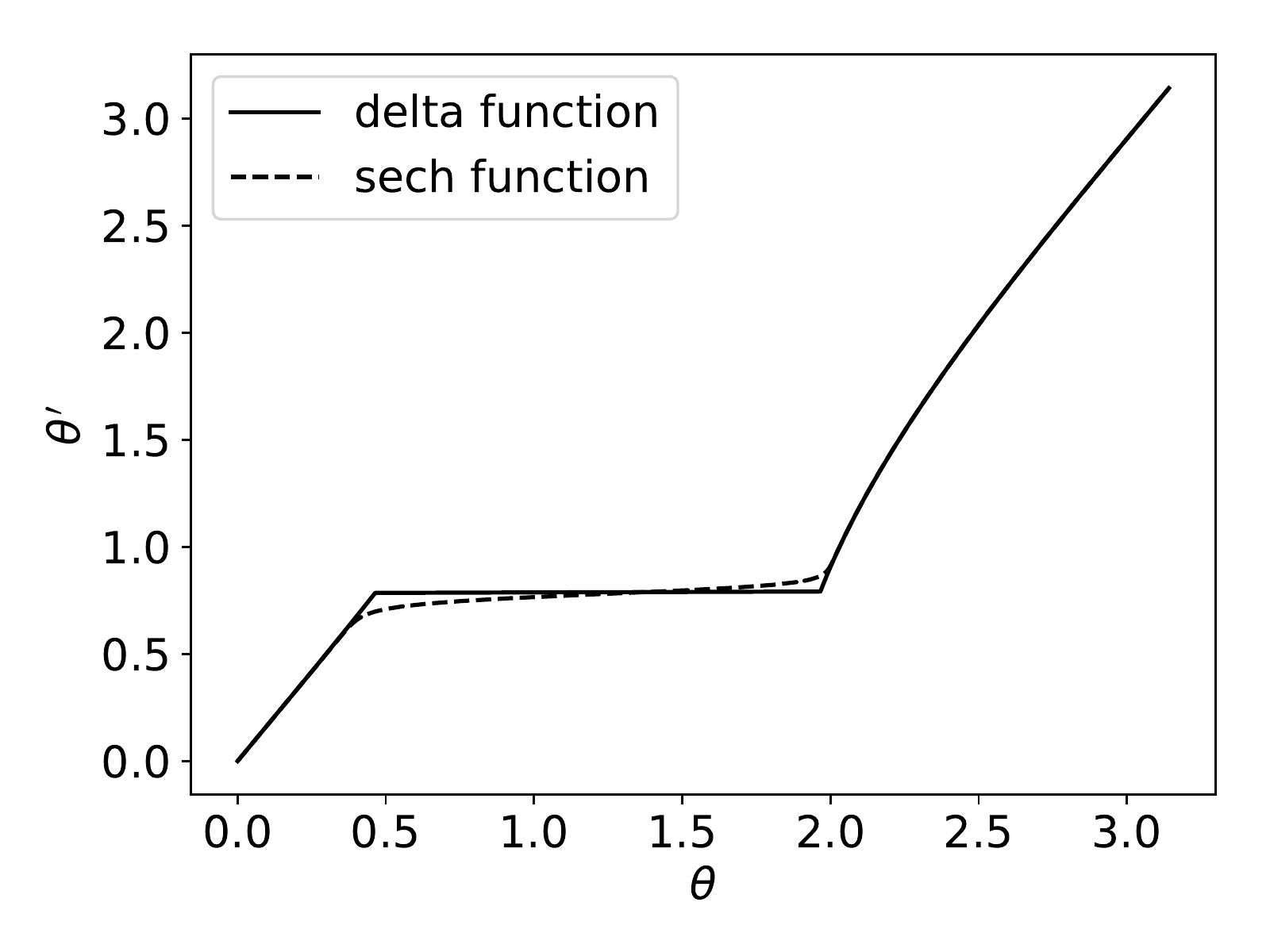}
  \caption{The map $\theta'(\theta)$ produced by a ring monitor function
based on a delta function \cref{eq:mdelta}, with ${\lambda = 5}$ and
${\Theta' = \pi/4}$, and one based on a smoother sech function
\cref{eq:mring}, with ${\epsilon = \pi/50}$. For the
delta-function-based monitor function, all
$\theta \in [\theta_1, \theta_2]$ are mapped to $\Theta'$. For the
smoother approximation, a similar transition occurs, but now over a
distance (in $\theta'$) of $\mathcal{O}(\epsilon)$.}
\label{fig:m_ring}
\end{figure}

\subsubsection{Analytical estimates of the regularity of the ring meshes}

\begin{figure}[!tb]
  \centering
  \includegraphics[width=0.6\columnwidth]{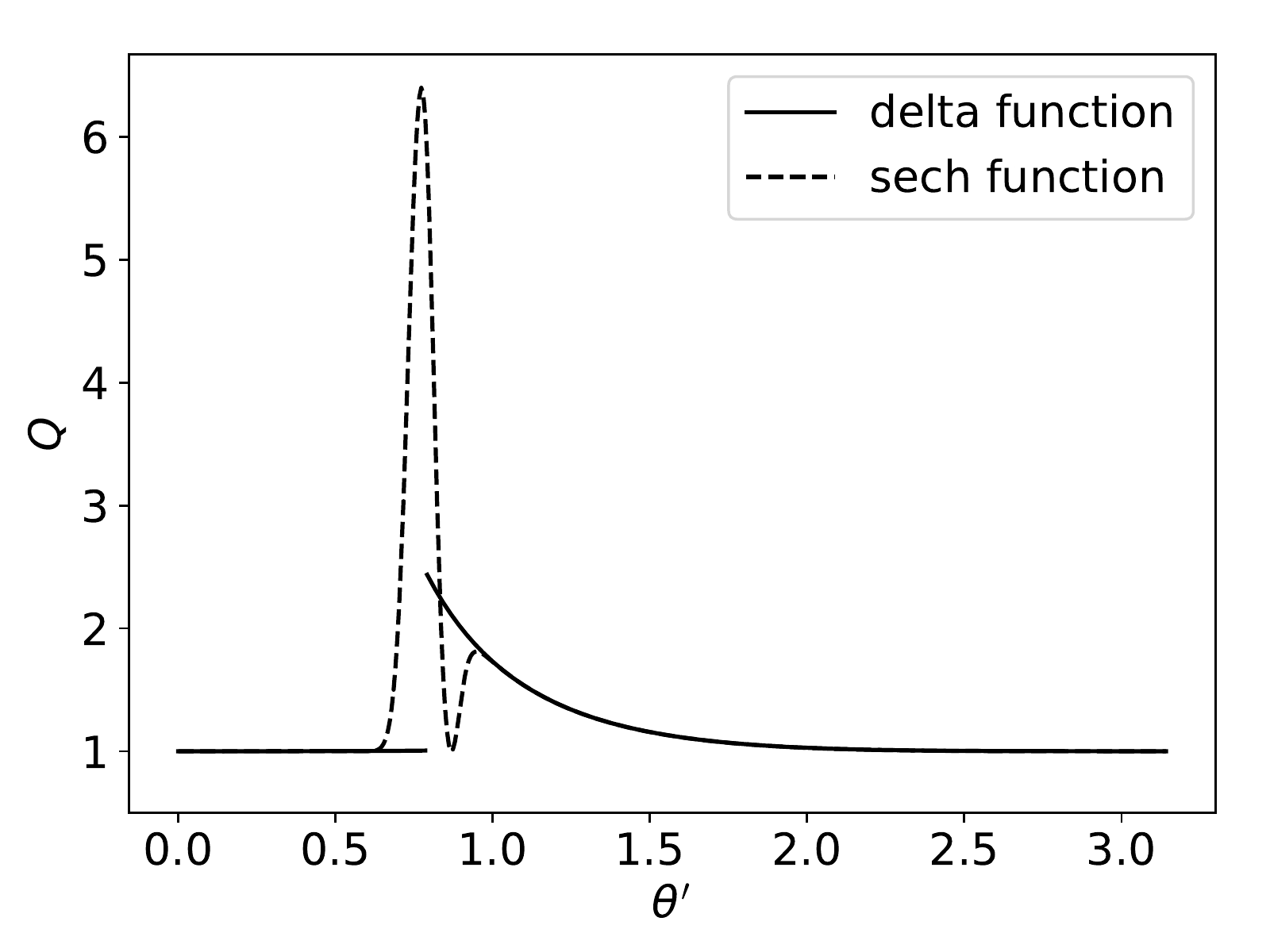}
  \caption{The skewness functions for the maps generated by the
delta-function-based and sech-function-based monitor functions for
ring-like features. For the delta-function example, we have not tried to
represent the infinite skewness in the ring itself. The maximum skewness
outside the ring is approximately 2.467. Intuitively, the ring
`swallows' cells due to the delta function. The remaining cells must
then be stretched out to cover the entire area, leading to skewness. For
the smooth example, the maximum skewness in the ring region is around
6.4. There is a secondary peak in $Q$ outside the ring, as in the
delta-function case, and $Q$ briefly touches 1 due to continuity.}
\label{fig:Q_ring}
\end{figure}

We firstly consider the smoother monitor function \cref{eq:mring}
with the same parameters as before. The skewness factor $Q$ is plotted
in \cref{fig:Q_ring}. Again, $Q$ is continuous, taking the value of 1 at
the poles. Unsurprisingly it takes its maximum value inside the ring
region. This value, close to 6.4 (representing anisotropic stretching by
a factor of between 12 and 13), is fairly moderate, given that the
monitor function varies by a factor of 63.5. We note though, that due to
continuity, $Q$ briefly touches 1 close to the ring. The resulting mesh
cells are, as we will see later,
well-aligned with the ring itself, and such a mesh is well-suited for
representing a function that is aligned with the ring
\citep{huang2011adaptive,budd2018inprep}. This is important: when
calculating such anisotropic functions, the alignment of the mesh with
the features of the function can lead to interpolation error estimates
which are much lower than those arising from a uniform mesh
\citep{huang2011adaptive}. In a sense, the good features of the
alignment outweigh the bad features of the consequent mesh skewness.

We also consider the delta function \cref{eq:mdelta}, with $Q$ plotted
in \cref{fig:Q_ring}. This function is theoretically infinite in the
ring, as cells have length zero in the meridional direction. However,
the maximum skewness outside of this is just 2.467..., obtained by
substituting $m(\theta') = 1$, $\theta' = \Theta'$, $\theta = \theta_2$,
and $\alpha$ in \cref{eq:Qaxi}. The values of $\theta_2$ and $\alpha$
are the same as those used previously.

\subsubsection{The resulting ring meshes}

\begin{figure}[!tb]
  \centering
  \includegraphics[width=0.45\columnwidth]{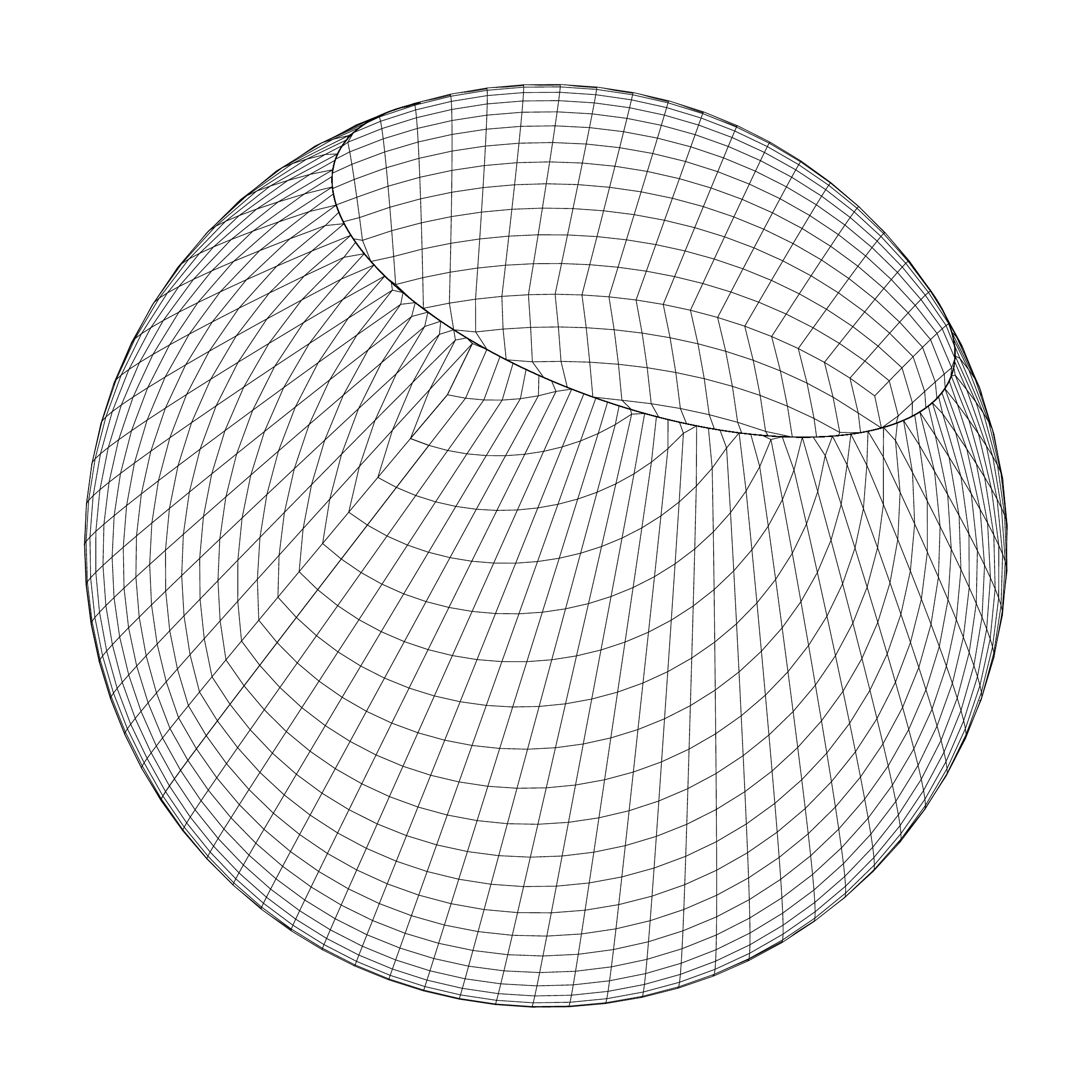}
  \includegraphics[width=0.45\columnwidth]{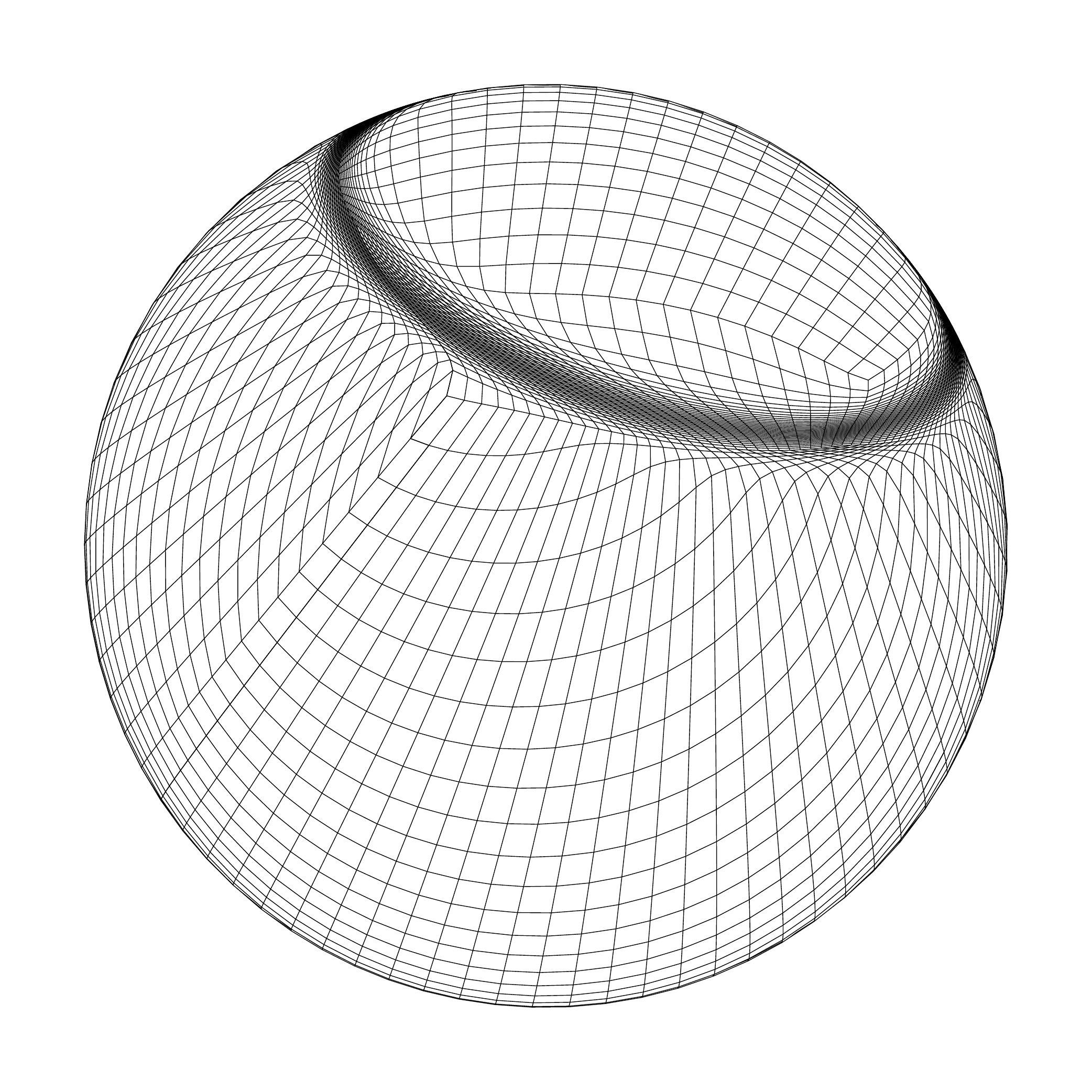}
  \caption{A cubed-sphere mesh, adapted to the delta-function-based and
sech-function-based ring monitor functions.}
\label{fig:ri-cs-meshes}
\end{figure}

\begin{figure}[!tb]
  \centering
  \includegraphics[width=0.45\columnwidth]{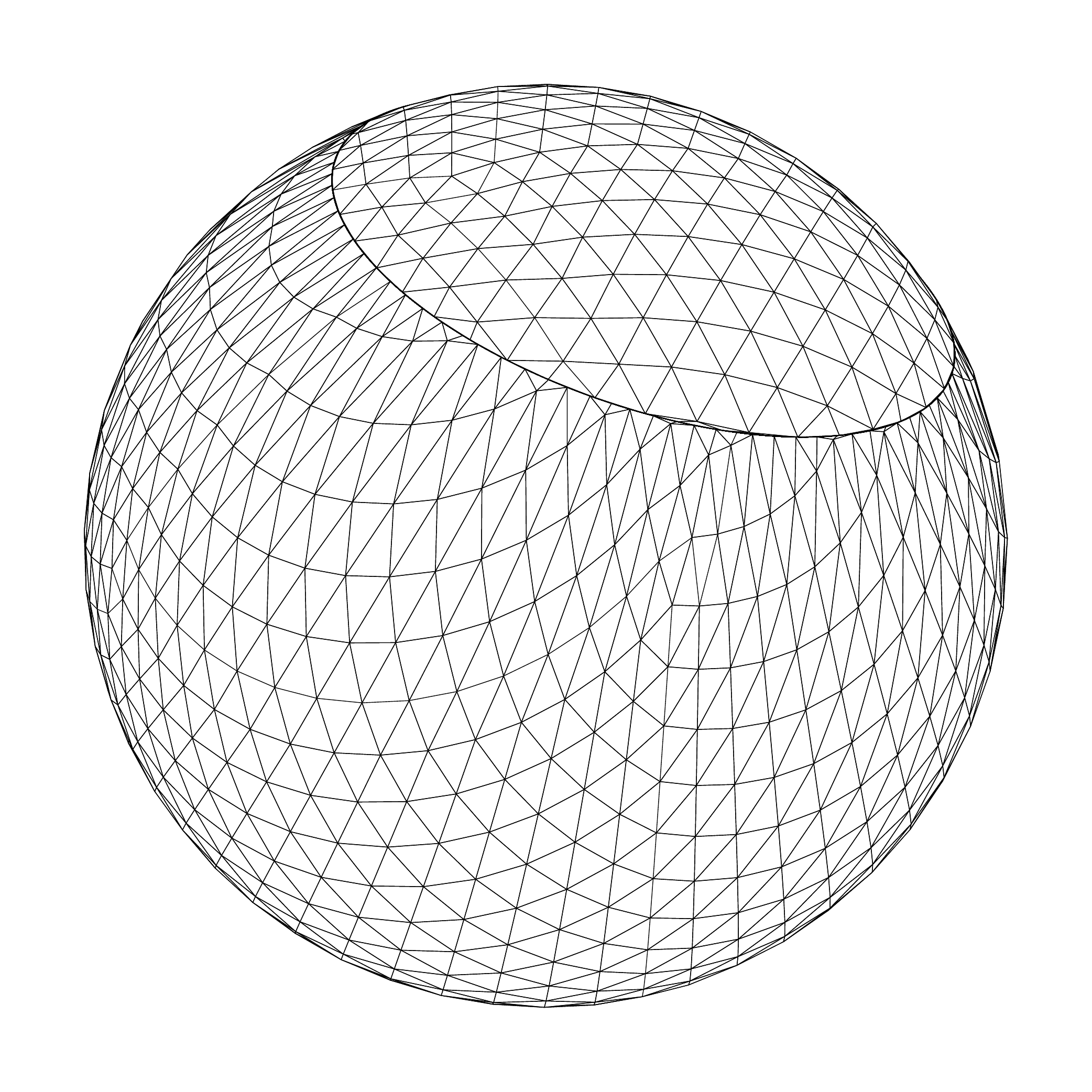}
  \includegraphics[width=0.45\columnwidth]{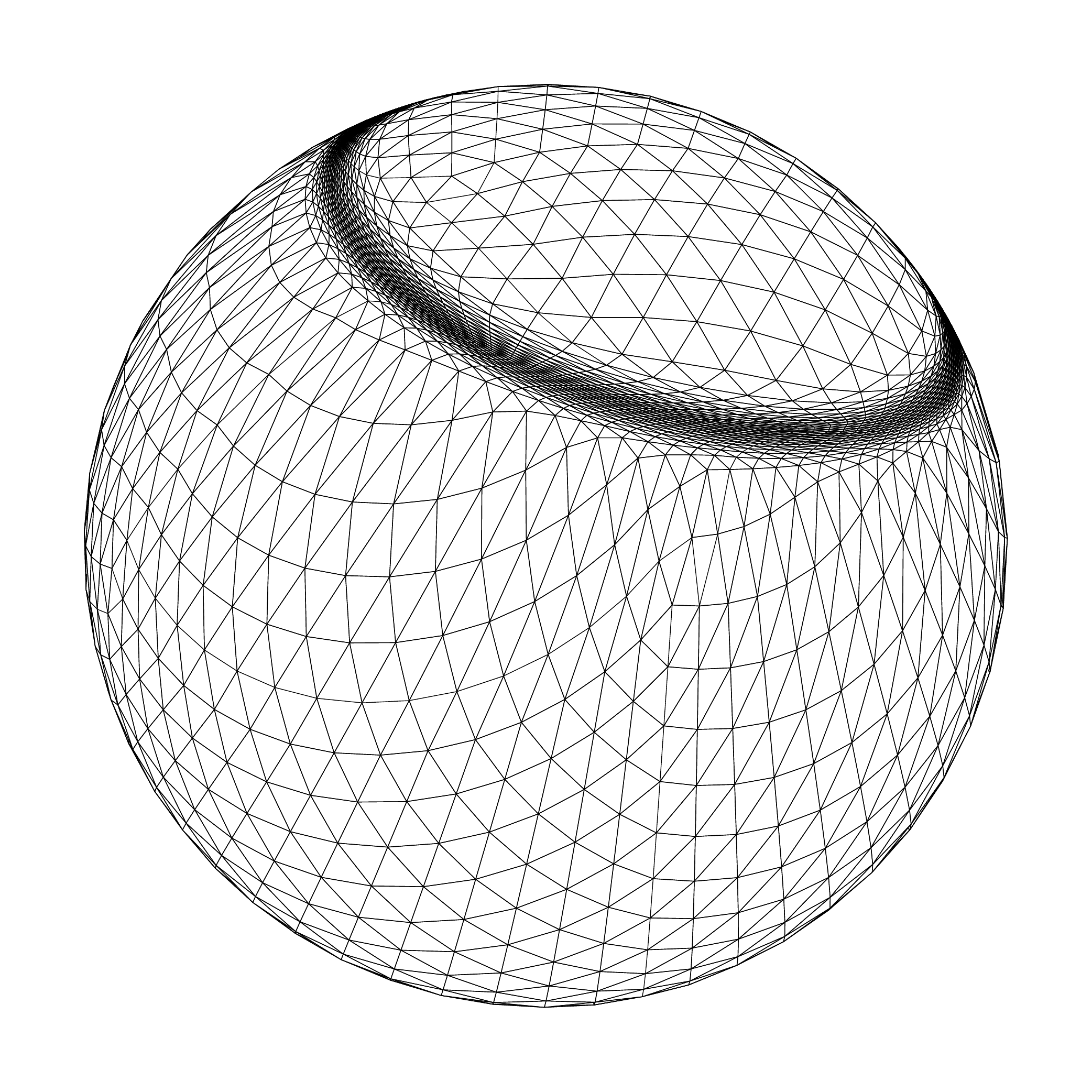}
  \caption{An icosahedral mesh, adapted to the delta-function-based and
sech-function-based ring monitor functions.}
\label{fig:ri-ic-meshes}
\end{figure}

We next show ring meshes induced by these monitor functions.
Cubed-sphere meshes are shown in \cref{fig:ri-cs-meshes}, while
icosahedral meshes are shown in \cref{fig:ri-ic-meshes}. While the
meshes adapted to delta functions are obviously unsuitable for
calculations, outside the singular ring they are remarkably similar to
those adapted to the corresponding smooth sech function. The smoother
meshes are, as predicted, quite skew in the ring region. However,
they are well-aligned with the ring itself, this will lead to lower
interpolation estimates of the underlying solution \citep{huang2011adaptive}.

\subsection{Direct numerical calculations of the skewness of the resulting meshes}
\label{ssec:numaxi4}

Later, in \cref{sec:numnonaxi}, we will apply the general numerical
methods developed in \citet{mcrae2018optimal} to generate examples of
meshes produced by non-axisymmetric monitor functions. We firstly apply
this approach to the axisymmetric examples above so that we can compare
the output of the numerical method to the analytic results of mesh
regularity we have obtained so far.

Given an analytically prescribed monitor function, the numerical method
in \citet{mcrae2018optimal} solves a Monge--Ampère equation of the form
\begin{equation}
\label{eq:mdetstuff}
m(\vec{x}) \det \left((\nabla\exp(\nabla u)\vec{\xi})\cdot P_\xi + \exp(\nabla u)\vec{\xi}\otimes\vec{\xi}\right) = \alpha
\end{equation}
for the scalar mesh potential $u(\vec{\xi})$, where $P_\xi$ is a
projection matrix and $\alpha$ a normalisation constant. The coordinate
mapping $\vec{x}(\vec{\xi})$ can then be derived from $u$ via the
exponential map \cref{eq:expmap}, and lives in the finite element space
$(P_2)^3$. The derivation of \cref{eq:mdetstuff} is given in
\citet{mcrae2018optimal}; briefly, the determinant term is a way to
express the area ratio $r(\vec{\xi})$ in \cref{eq:equi}, under the
assumption that all calculations are done with the sphere immersed in
$\mathbb{R}^3$.

Given the coordinate map $\vec{x}(\vec{\xi})$ obtained by this method,
we can do postprocessing to obtain an approximation to the skewness
quantity $Q$. We define the raw Jacobian $J$ as the $L^2$-projection of
$\nabla_\xi \vec{x}$ into the finite element space
$(P_2)^{3 \times 3}$. Analytically, the column space of $J$ would have
no component normal to the sphere, but this is not true in the presence
of discretisation errors. We therefore form
$J' = (I - \vec{x}\vec{x})\cdot J$ to eliminate this component
completely. At each node, we then perform an SVD: $J' = U\Sigma V$.
The matrix $\Sigma$ is a diagonal matrix containing the singular values
of the map $\sigma_1, \sigma_2$, and a third entry $\sigma_3 \approx 0$.
The first column of $U$, $\vec{u}_1$, is a vector in the direction of
maximum local stretching, while $\vec{u}_2$ is at right angles to this.
The vectors $\sigma_1 \vec{u}_1$ and $\sigma_2 \vec{u}_2$ therefore
represent the local stretching of the mesh.

In the following figures, we show the skewness factor $Q$, together with
the vectors $\sigma_1 \vec{u}_1$ and $\sigma_2 \vec{u}_2$, for several
of the examples considered previously. For brevity, we use an
icosahedral mesh in each case. \Cref{fig:skewness-axi-tanh} shows the
smoother top-hat regional mesh example, and \cref{fig:skewness-axi-sech}
shows the sech-based ring example.

\begin{figure}[!tb]
  \centering
  \includegraphics[width=0.8\columnwidth]{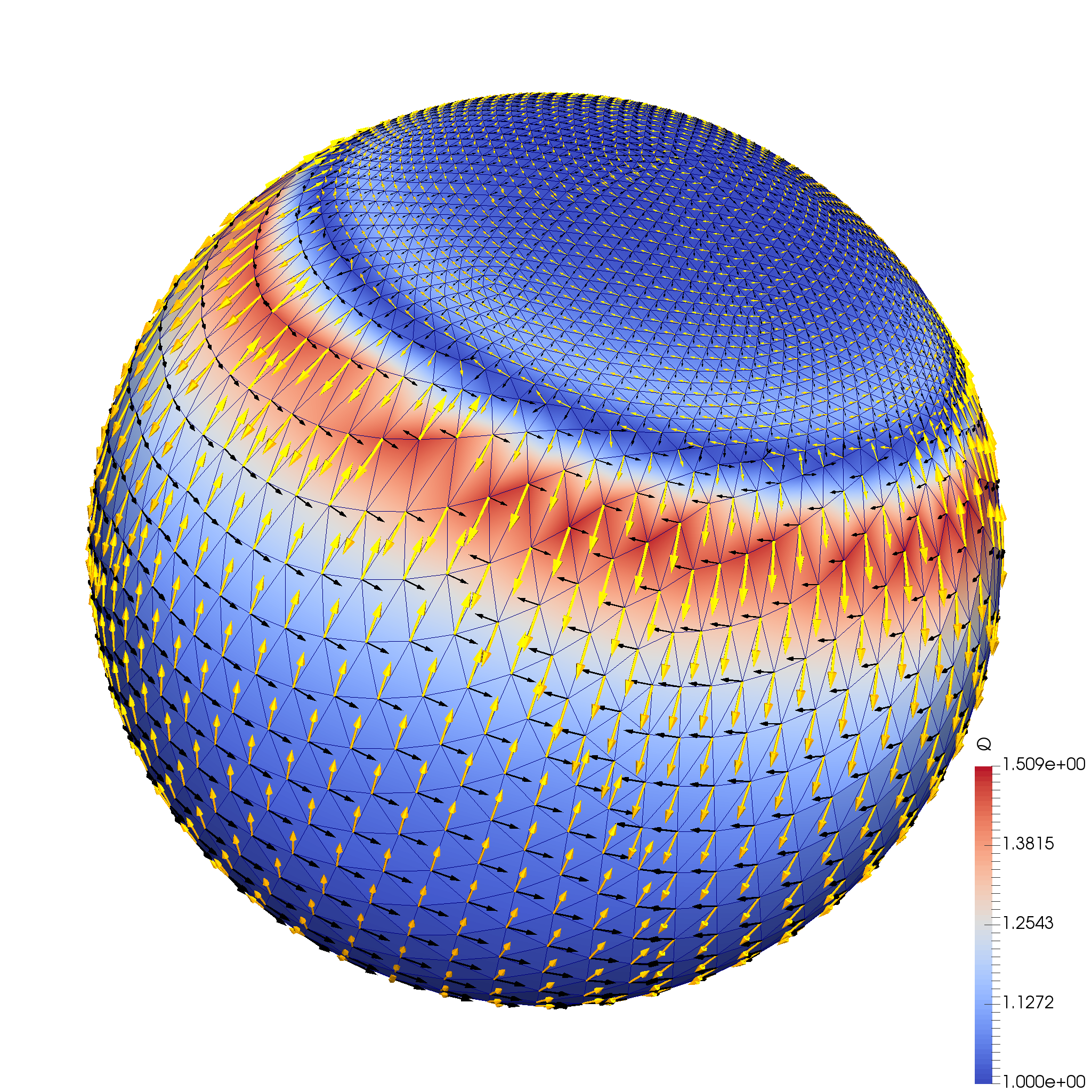}
  \caption{Numerical skewness calculation for the smoother top-hat regional
monitor function. The background colour shows the skewness, $Q$, while
the arrows represent the scaled singular vectors $\sigma_1 \vec{u}_1$
(orange) and $\sigma_2 \vec{u}_2$ (black). The `flipping' of arrows
between neighbouring cells is an artifact of the sign-ambiguity of the
SVD, and is not supposed to be physically significant. It is clear that
$Q$ approaches 1 at the poles $\pm \vec{\omega}$. The maximum skewness
is found just outside the high-resolution region, as predicted by
\cref{fig:Q_tophat}, and the intermediate band of $Q = 1$ is visible. In
the outer region, there is meridional stretching, so the leading
singular vector is in the meridional direction. In the inner region,
there is meridional compression, so the leading singular vector is in
the zonal direction.}
\label{fig:skewness-axi-tanh}
\end{figure}

\begin{figure}[!tb]
  \centering
  \includegraphics[width=0.8\columnwidth]{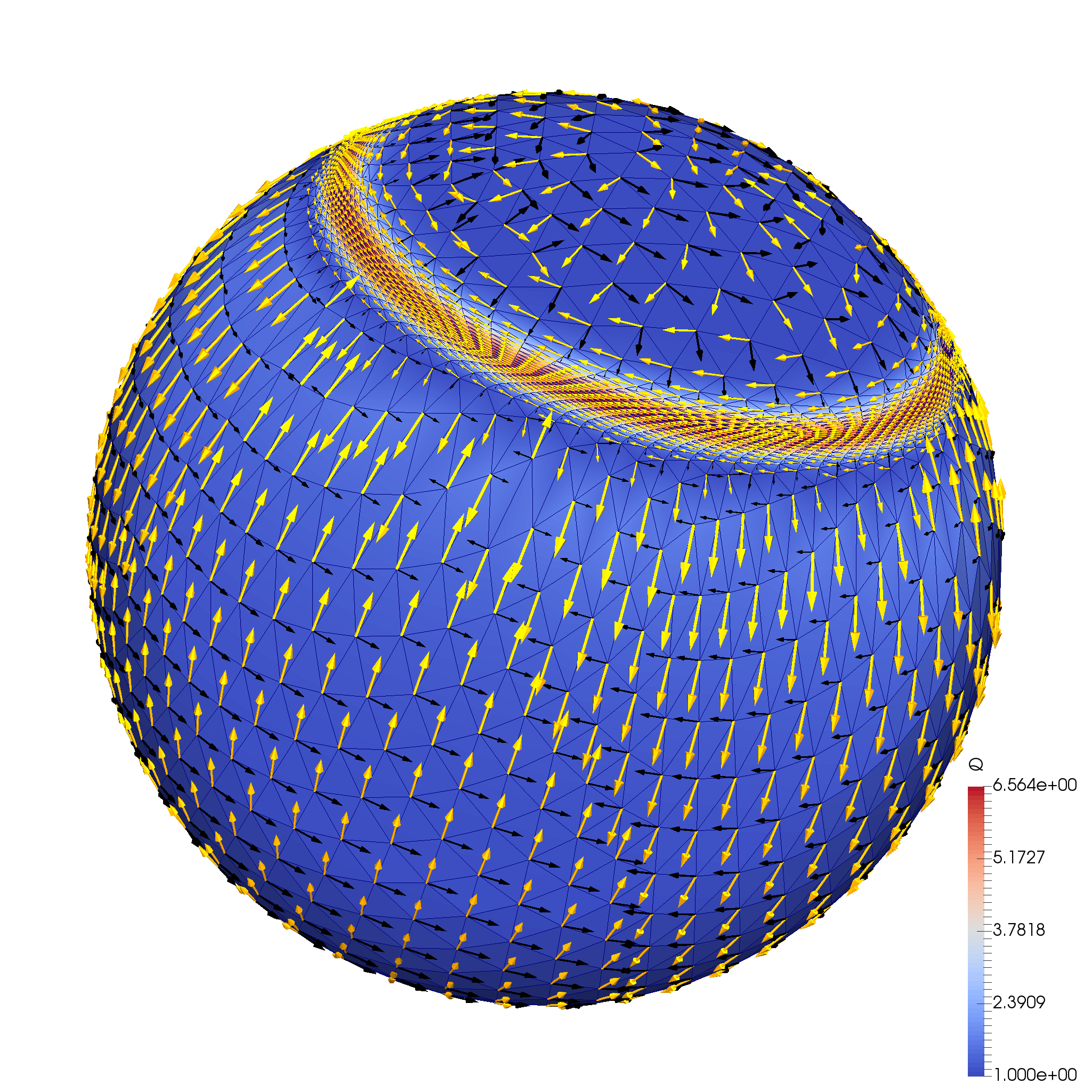}
  \caption{As \cref{fig:skewness-axi-tanh}, for the sech-based ring
monitor function. The maximum skewness is found in the high-resolution
ring, and is close to the earlier prediction of approximately 6.4. The
return to $Q = 1$ is visible as a darker blue band just outside the
ring. Within the ring, there is extreme meridional compression, so the
leading singular vector is in the zonal direction. Outside the ring,
there is mild meridional stretching, so the leading singular vector is
in the meridional direction. Observe as a consequence, the excellent
alignment of the mesh with the ring feature.}
\label{fig:skewness-axi-sech}
\end{figure}

The results of these numerical calculations agree with the earlier
analytical estimates. It can be seen that the vectors
$\vec{u}_1$ and $\vec{u}_2$ generally point towards the pole and
perpendicular to it, a consequence of the axisymmetry of the examples.
This is occasionally violated in the regions where $Q \approx 1$, in
which there is no dominant direction of local stretching. The maximum
skewness is somewhat underestimated in the point singularity example.
This is perhaps not surprising: the large skewness is caused by
stretching of cells in the meridional direction. The meridional
resolution is therefore poor precisely where the skew is large, and
fairly large discretisation errors can be expected. Using a once-refined
mesh (not shown) gives the closer estimate of 5.59.

\section{A comparison with meshes on the plane}
\label{sec:planecomp}

We now briefly compare the regularity of a mesh on the sphere with an
`equivalent' mesh on a subset of the plane, extending the analysis in
\citet{budd2015geometry}. The conclusion of this section is that meshes
on the sphere have, as expected, a higher regularity than those on the
plane.

On the plane, we consider a radially-symmetric
monitor function, which induces a map $R(r)$ from computational space to
physical space. The equidistribution condition then leads to
\begin{equation}
\label{eq:equirad}
m(R) \dd{R}{r} R = \alpha r,
\end{equation}
with $\alpha$ a normalisation constant. This can be compared
with \cref{eq:equiaxi} for the sphere. For a disc of radius $\pi$,
$\alpha$ satisfies
\begin{equation}
\label{eq:radalpha}
\frac{1}{2} \alpha \pi^2 = \int_0^\pi m(R) R \,\mathrm{d}R,
\end{equation}
c.f.\ \cref{eq:axialpha}. In \citet{budd2015geometry}, it is shown that
the eigenvalues of the map are $\mathrm{d}R/\mathrm{d}r$ and $R/r$. By
following similar steps to \cref{ssec:localreg}, the skewness $Q$ is
given by
\begin{equation}
\label{eq:Qrad}
Q = \frac{1}{2} \left( \frac{\alpha}{m(R)} \left( \frac{r}{R} \right)^2 + \frac{m(R)}{\alpha} \left( \frac{R}{r} \right)^2 \right).
\end{equation}

For $r$ and $R$ small, we have $R/r \to dR/dr$ as $r \to 0$. As on the
sphere, it follows that
\begin{equation}
Q \to 1 \quad \mbox{as} \quad r, R \to 0,
\end{equation}
and hence the mesh is very regular in this limit. However, as we
approach the boundary (which we assume is mapped to itself, so that
$R(a) = a$), we have
\begin{equation}
Q = \frac{1}{2} \left( \frac{\alpha}{m(a)} + \frac{m(a)}{\alpha} \right),
\end{equation}
This value is completely arbitrary; there is no control over $Q$ as we
approach the boundary, and the mesh could be very skew there, as was
observed in \citet{budd2015geometry}. This behaviour could not occur on
the sphere due to the absence of a boundary.
To give a specific numerical example, consider the sech-based ring
monitor function \cref{eq:mring},
\begin{equation}
m(\theta') = 1 + \frac{\beta}{\epsilon} \sech^2\left(\frac{\theta'^2 - \Theta'^2}{\epsilon}\right),
\end{equation}
together with an `equivalent' radially-symmetric monitor function for a
disk of radius~$\pi$,
\begin{equation}
m(R) = 1 + \frac{\beta}{\epsilon} \sech^2\left(\frac{R^2 - \Theta'^2}{\epsilon}\right).
\end{equation}
For comparison we take $\Theta' = \pi/4$, $\beta = 5\pi/4$,
and $\epsilon = \pi/50$. The resulting skewness estimate is shown in
\cref{fig:Q_sphpla}. Both cases naturally lead to large skewness in the ring
itself. However, the secondary peak is much larger in the planar case
than for the sphere, corresponding to (unwanted) radial stretching of
cells. This is consistent with the mesh shown in Fig 4.3 of
\citet{budd2015geometry}.

\begin{figure}[!tb]
  \centering
  \includegraphics[width=0.6\columnwidth]{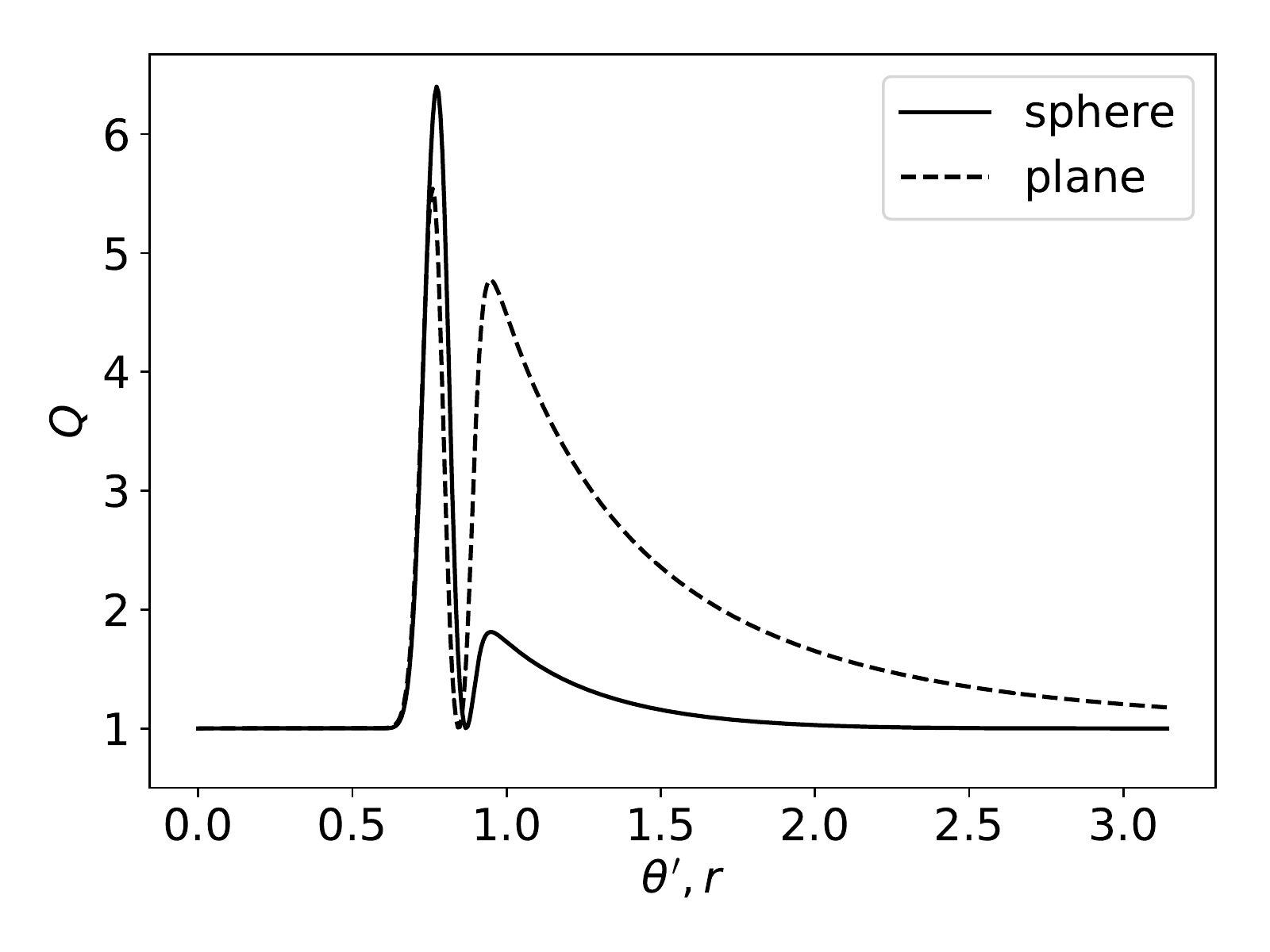}
  \caption{The skewness functions for the maps generated by the
sech-based ring monitor function on the sphere and on the plane. Both
cases lead to large skewness in the ring itself, which is expected, and
is likely to be desirable. However, the differing geometries result in
the plane having a far larger secondary peak than the sphere. This
secondary peak corresponds to cells outside the ring being radially
stretched in order to cover the remaining area.}
\label{fig:Q_sphpla}
\end{figure}

\section{More general meshes and comparison to some other methods}
\label{sec:numnonaxi}

In the previous sections, we considered several examples that made use
of axisymmetric monitor functions to generate meshes. Whilst these
meshes are, in certain cases, computationally useful, and can be
analysed exactly, they do not, of course, have the generality of the
meshes required for calculating the solution of most PDEs.  Without the
axisymmetric restriction, the equidistribution requirement and the
optimal transport condition leads to a generalised Monge--Ampère
equation which can be expressed with respect to spherical angles, as in
\cref{ssec:otcoord}, or with respect to the background Cartesian space
$\mathbb{R}^3$, as was done in \citet{mcrae2018optimal}.

The purpose of this section is to now consider some interesting
non-axisymmetric examples of meshes obtained from more general monitor
functions. This is done by solving the Monge--Ampère-like equation
numerically using the methods described in \citet{mcrae2018optimal}. We
will look at the compression, skewness and alignment of the resulting
meshes as well as demonstrating the robustness and flexibility of the
method. We also present some examples that can be compared with meshes
generated by other methods.

\subsection{Two intersecting rings}

\begin{figure}[!tb]
  \centering
  \includegraphics[width=0.8\columnwidth]{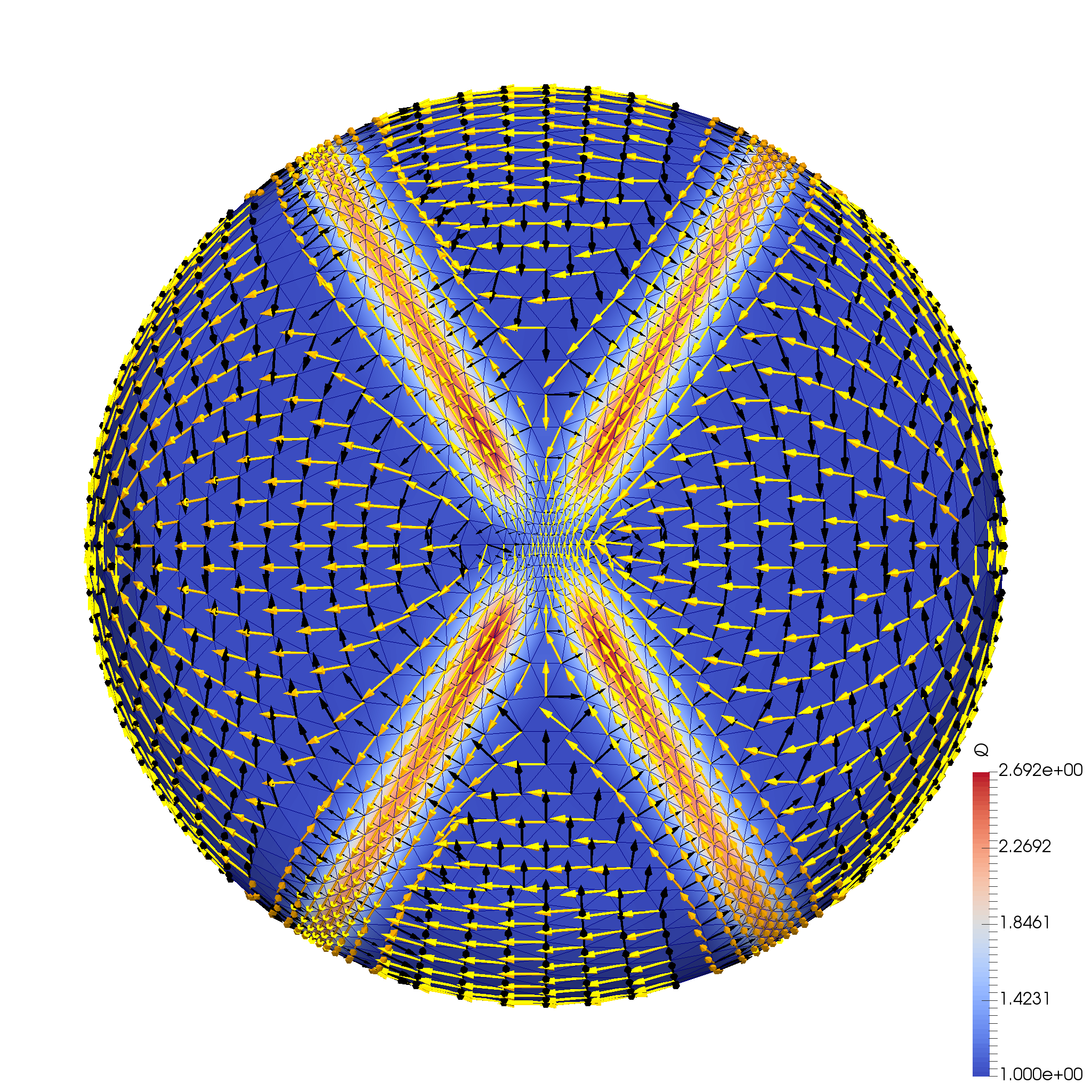}
  \caption{As \cref{fig:skewness-axi-tanh}, for the `cross' monitor
function \cref{eq:m-cross}. The mesh is well-aligned to the intersecting
ring features. Within the rings, the dominant singular vector is in the
direction of the ring. Away from the cross, the mesh undergoes very mild
stretching in order to `provide' resolution to the ring features.}
\label{fig:skewness-axi-cross}
\end{figure}

Our first example is shown in \cref{fig:skewness-axi-cross} and
comprises two intersecting ring features. This is based on (though
different to) the `cross' example in \citet{mcrae2018optimal}.
The monitor function that generates this is given by
\begin{equation}
\label{eq:m-cross}
  m(\vec{x}) = \prod_i^N \left(1 + \alpha_i \sech^2(\beta_i ( \|\vec{x} - \omega_i\|^2  - (\pi/2)^2))\right).
\end{equation}
We take $N$ = 2, $\alpha_1 = \alpha_2 = 5$, $\beta_1 = \beta_2 = 5$,
and the axes $\omega_1, \omega_2 = (\pm \sqrt{3}/2,0,1/2)$ are such that
the rings cross at an angle of 60 degrees. The mesh cells in the ring
are reasonably skew, as expected, and locally the mesh compression and
regularity are very similar to those for the single ring considered in
\cref{sec:numaxi}. The cells outside the rings are almost unaffected and
show great regularity. A key observation from this figure is that at the
point where the rings intersect, and where we might expect to see a very
distorted mesh, we see instead that the skewness is small, and that the mesh cells are very nearly
uniform. 

\subsection{Sinusoidally-varying ring}

\begin{figure}[!tb]
  \centering
  \includegraphics[width=0.8\columnwidth]{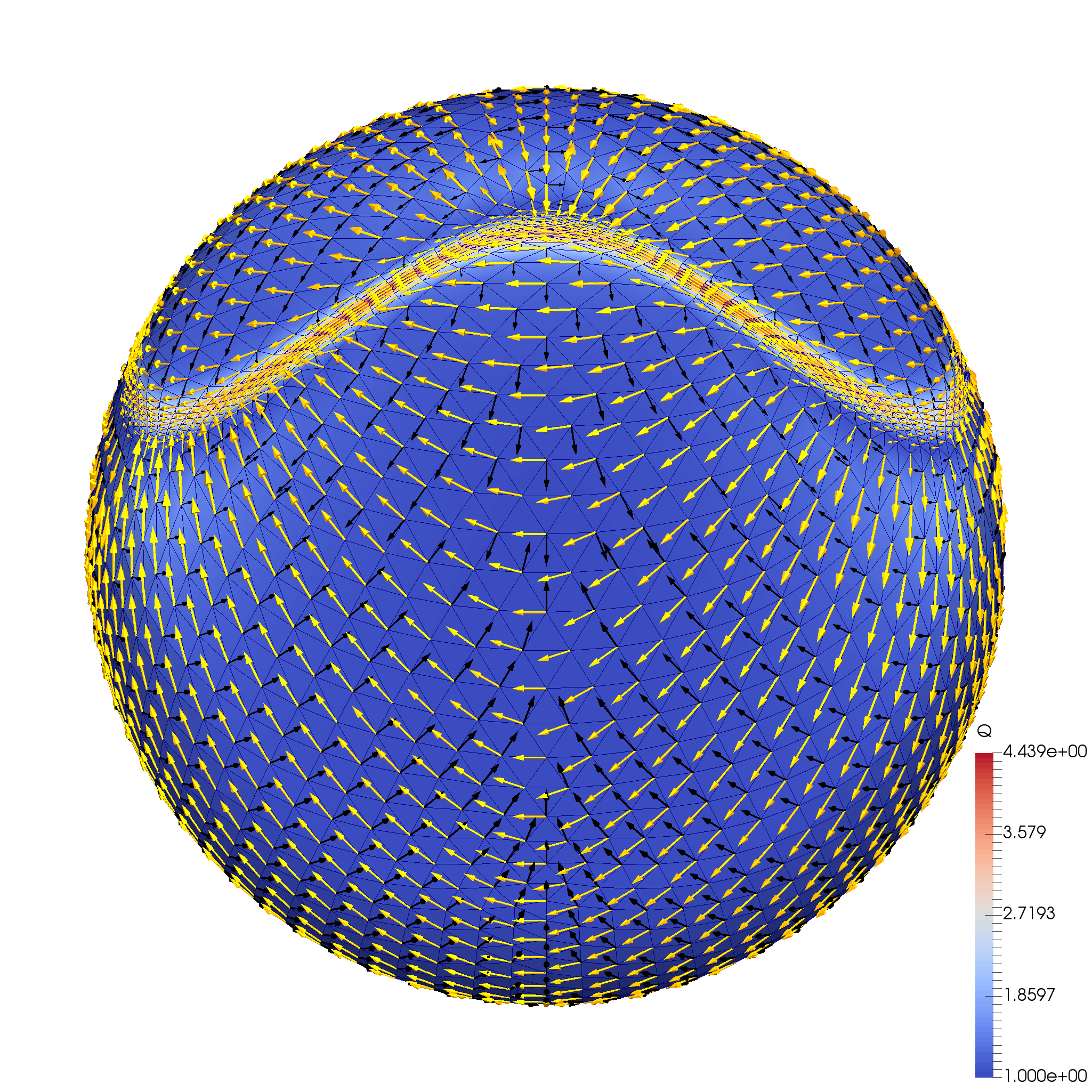}
  \caption{As \cref{fig:skewness-axi-tanh}, for the sinusoidal monitor
function \cref{eq:m-rossby}. The dominant singular vectors show that the
mesh is very well aligned to the sinusoidal feature. There is some mild
stretching of the mesh immediately outside the feature in order to
provide the enhanced resolution. Away from the feature, the mesh is
incredibly regular.}
\label{fig:skewness-axi-rossby}
\end{figure}

Our second example is shown in \cref{fig:skewness-axi-rossby}. This mesh
is induced by a monitor function which concentrates cells within a
sinusoidal pattern in the northern hemisphere. It is inspired by the
related planar example in \citet{budd2015geometry}, and is not
dissimilar to a Rossby--Hauritz wave (see, for example, test case 6 from
\citet{williamson1992standard}). The monitor function is given by
\begin{equation}
\label{eq:m-rossby}
  m(\vec{x}) = 1 + \alpha\sech(\beta\theta'),\qquad
  \theta' = \theta - (\theta_c + \frac{1}{2}\theta_a\sin(k\phi)),
\end{equation}
where $\theta$ is the latitude (now measured from the equator), $\phi$
is the longitude, $\alpha = 15$, $\beta = 25$, $\theta_c = \frac{\pi}{6}$,
$\theta_a = \frac{\pi}{6}$, and we use a wavenumber $k = 3$. The mesh
cells are well-aligned with the sinusoidal pattern. There is also only
slight stretching of cells outside this high-resolution region and the
grid is very regular there. Hence this mesh would be very suitable for
computing a Rossby-Hauritz wave with good resolution. The local
properties of this mesh close to the wave are very similar to that of
the ring examples considered earlier.

\subsection{Equatorially-enhanced mesh}

Inspired by \citet{iga2017equatorially}, our next example is a mesh that
concentrates points in an equatorial region. That paper uses a
specially-constructed triangular mesh with an elaborate topology that
places far more cells around the equator than a normal icosahedral mesh.
A spring dynamics approach is then used to smooth the mesh, followed by
analytical transformations around problematic points. Here, we use the
optimal transport procedure to generate a non-uniform mesh from a
nearly-uniform icosahedral mesh, requesting a similar distribution of
resolution.

We define a target grid spacing $d^*$, which is a function of latitude 
$\theta'$ only (expressed in degrees). This takes the values
\begin{equation}
d^*(\theta') =
\begin{cases}
0.064, &|\theta'| < 13\\
0.064 + \frac{|\theta'| - 13}{31 - 13} (0.23 - 0.064), &13 \leq |\theta'| \leq 31\\
0.23, &|\theta'| > 31.
\end{cases}
\end{equation}
This is a similar target grid spacing to the `analytic resolution
distribution' in Figure 8a of \citet{iga2017equatorially} (note that
this is \emph{derived from their mesh topology}, rather than being
specified in advance). We then use a monitor function $m \propto 1/d^{*2}$
to control the cell area in the solution of the Monge--Ampère equation.
The mesh generation technique automatically obtains the correct constant
of proportionality.

\begin{figure}[!tb]
  \centering
  \includegraphics[width=0.7\columnwidth]{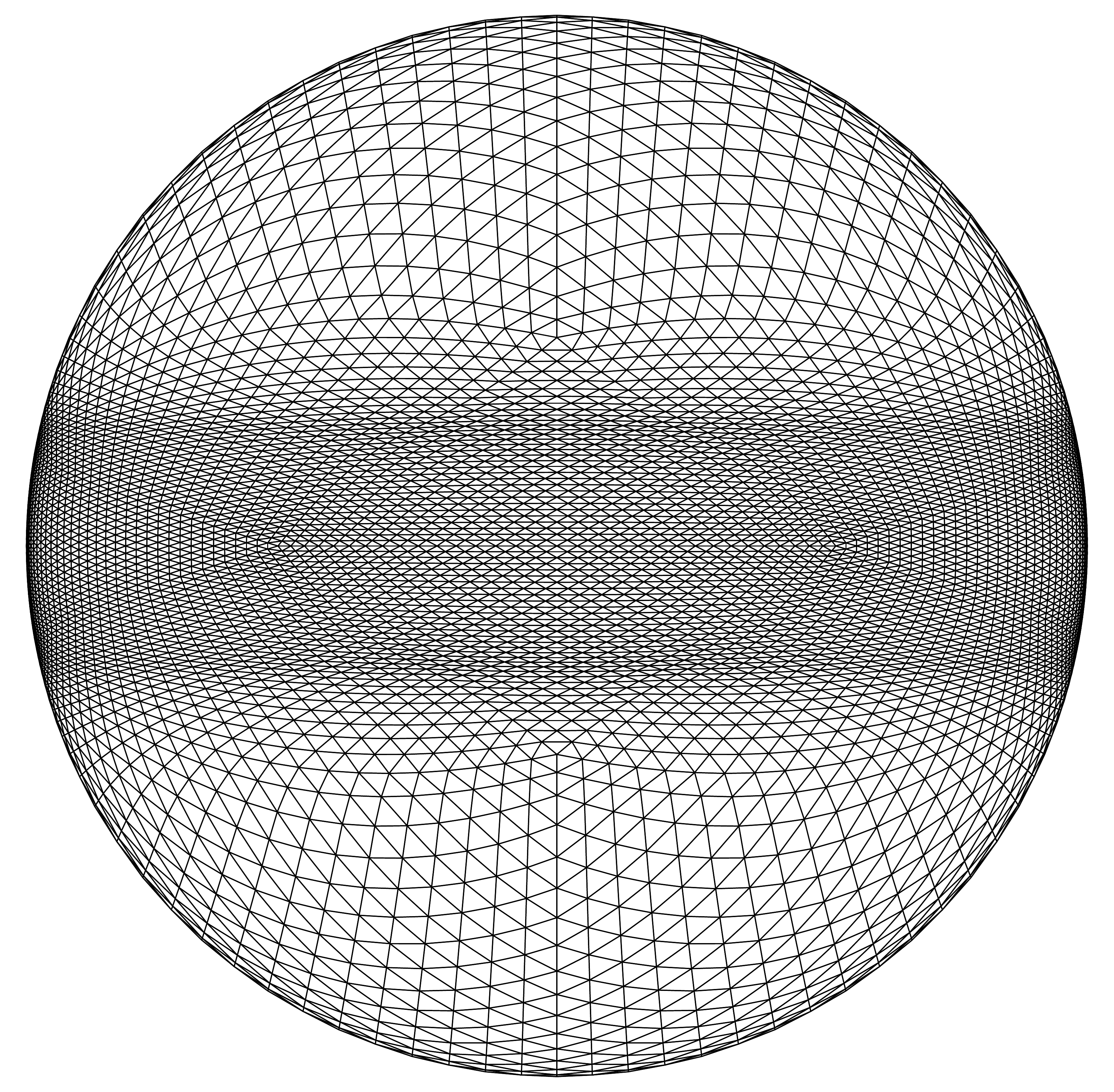}
  \caption{An equatorially-enhanced mesh produced using the optimal
transport approach.}
\label{fig:iga-mesh}
\end{figure}

\begin{figure}[!tb]
  \centering
  \includegraphics[width=0.58\columnwidth]{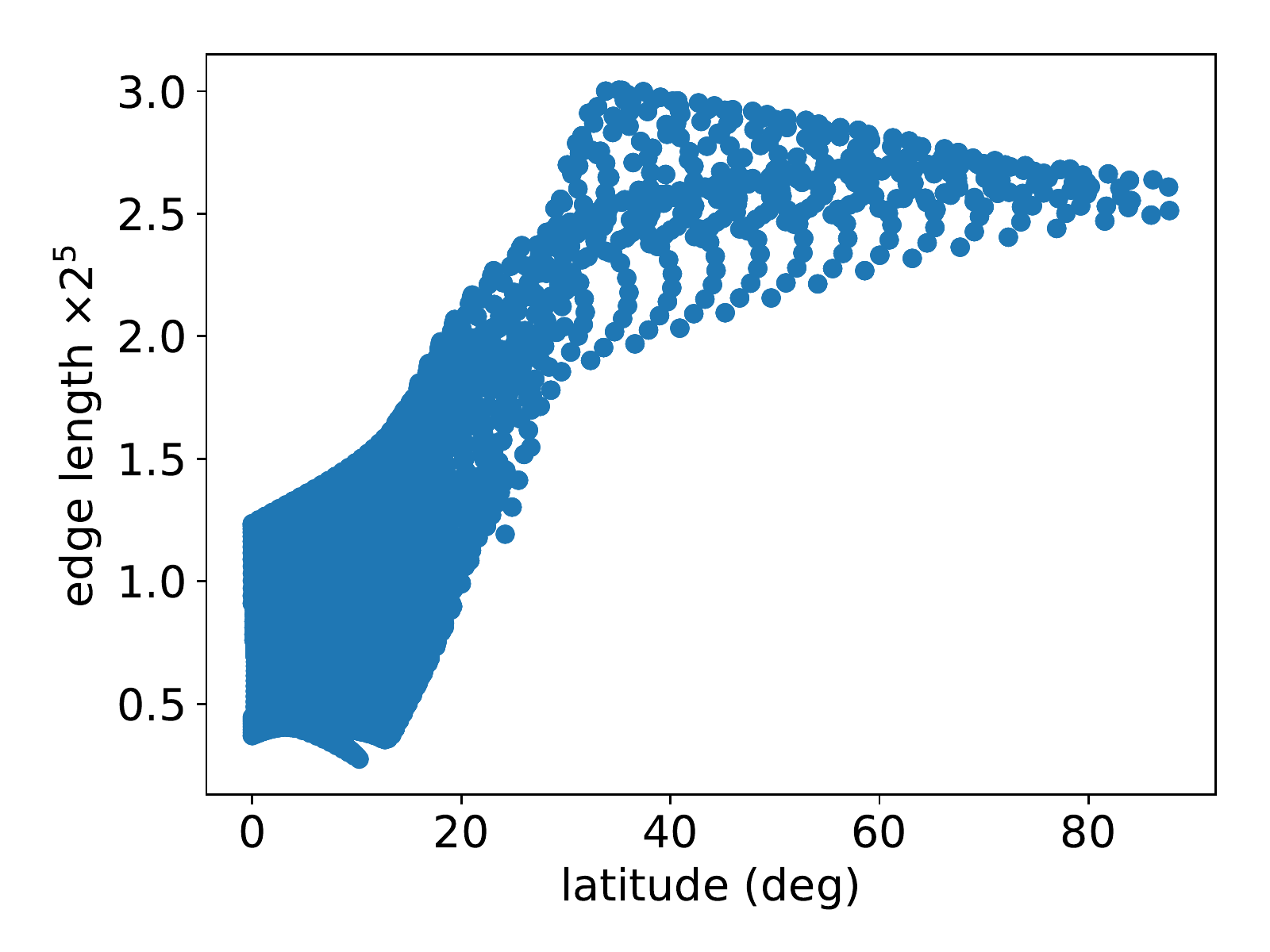}
  \caption{A graph of normalized edge length against latitude for the
equatorially-enhanced mesh (for symmetry reasons, only the northern
hemisphere is shown).}
\label{fig:iga-comp}
\end{figure}

The resulting mesh is shown in \cref{fig:iga-mesh}, and a graph of
mesh spacing against latitude is given in \cref{fig:iga-comp}. It is
clear from both figures that the desired mesh compression has been
achieved, and that the resulting mesh is smooth and has good regularity.
In comparison to the meshes in
\citet{iga2017equatorially}, our technique leads to slightly more
stretching of cells around the equator. This is unavoidable, as our
topology is fixed, and the only way to reduce cell area is to relocate
cells towards the equator from the mid-latitudes.

\subsection{A more-uniform icosahedral mesh}

Our final example uses optimal transport to tackle the minor
nonuniformities in a refined icosahedral mesh. The standard approach to
generating a refined icosahedral mesh is to refine the faces of an
icosahedron, then to project mesh vertices radially outwards onto the
surface of the sphere. The resulting mesh is reasonably uniform, but the
ratio of maximum to minimum cell area is approximately 2. Using our
optimal transport approach, we can equalise the cell areas.

This problem is slightly different to what we have done previously:
compared to \cref{eq:equi}, we instead have $m(\vec{\xi}) r(\vec{\xi}) = \alpha$.
The monitor function now depends on the computational coordinate
$\vec{\xi}$ rather than the physical coordinate $\vec{x}$. The
computation of the map is slightly easier, as a result, since one source
of nonlinearity is removed from the resulting PDE. We represent $m$ as a
piecewise-constant function defined by the areas of each cell on the
unadjusted mesh.

\begin{figure}
  \centering
  \includegraphics[width=0.6\columnwidth]{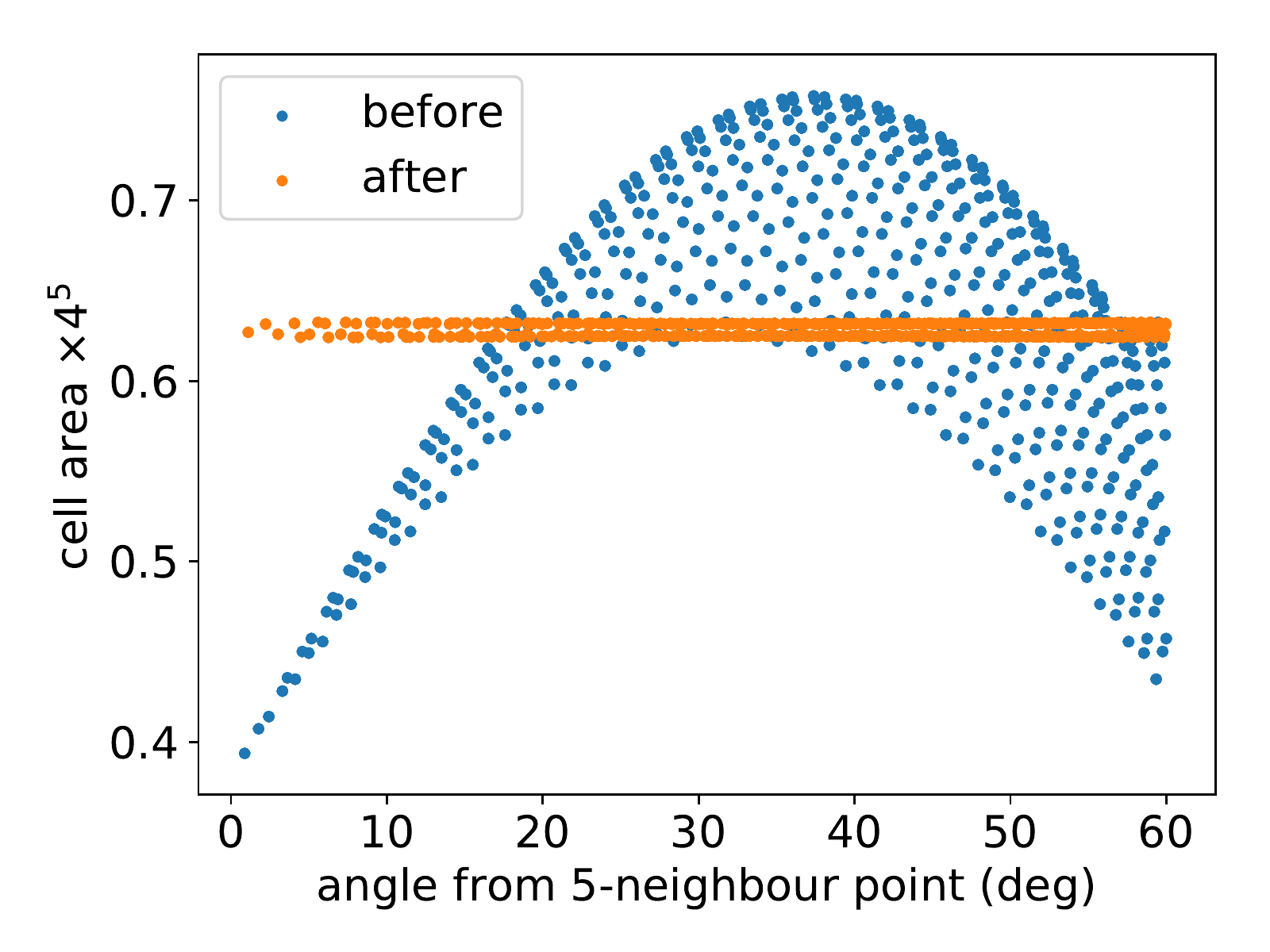}
  \caption{A graph of cell area against angle from the special
5-neighbour points, before and after the optimal transport procedure.
Before adjustment, the smallest cells are gathered around these special
points, and the max-min area ratio is approximately 1.925. After
adjustment, the max-min area ratio is just 1.013.}
\label{fig:icos-areas}
\end{figure}

\begin{figure}
  \centering
  \includegraphics[width=0.6\columnwidth]{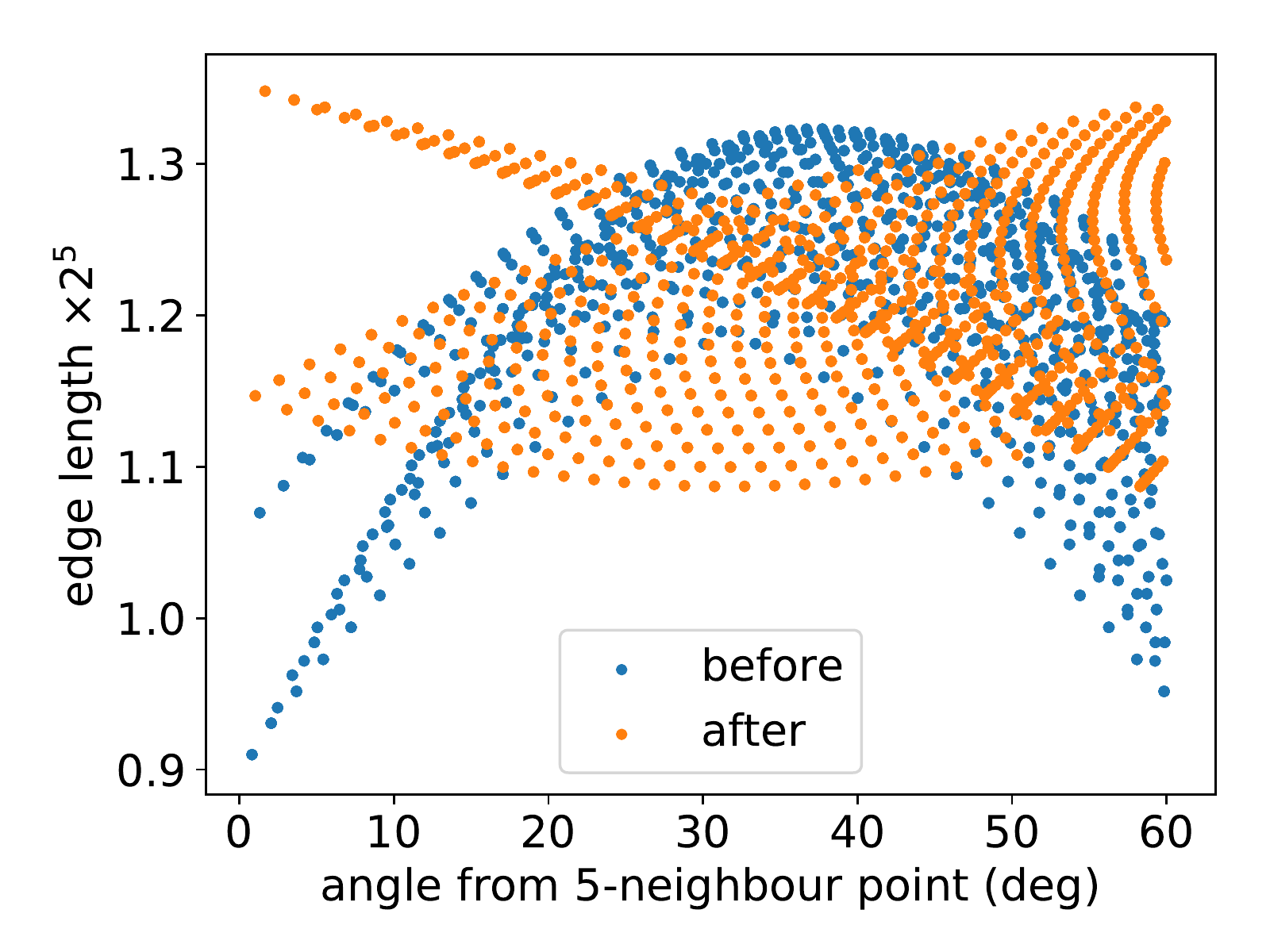}
  \caption{A graph of normalised edge length against angle from the
special 5-neighbour points, before and after the optimal transport
procedure. Globally, the edge lengths become more uniform, with a
resulting variation of approximately 20\%.}
\label{fig:icos-edgelengths}
\end{figure}

\begin{figure}[!tb]
  \centering
  \includegraphics[width=0.6\columnwidth]{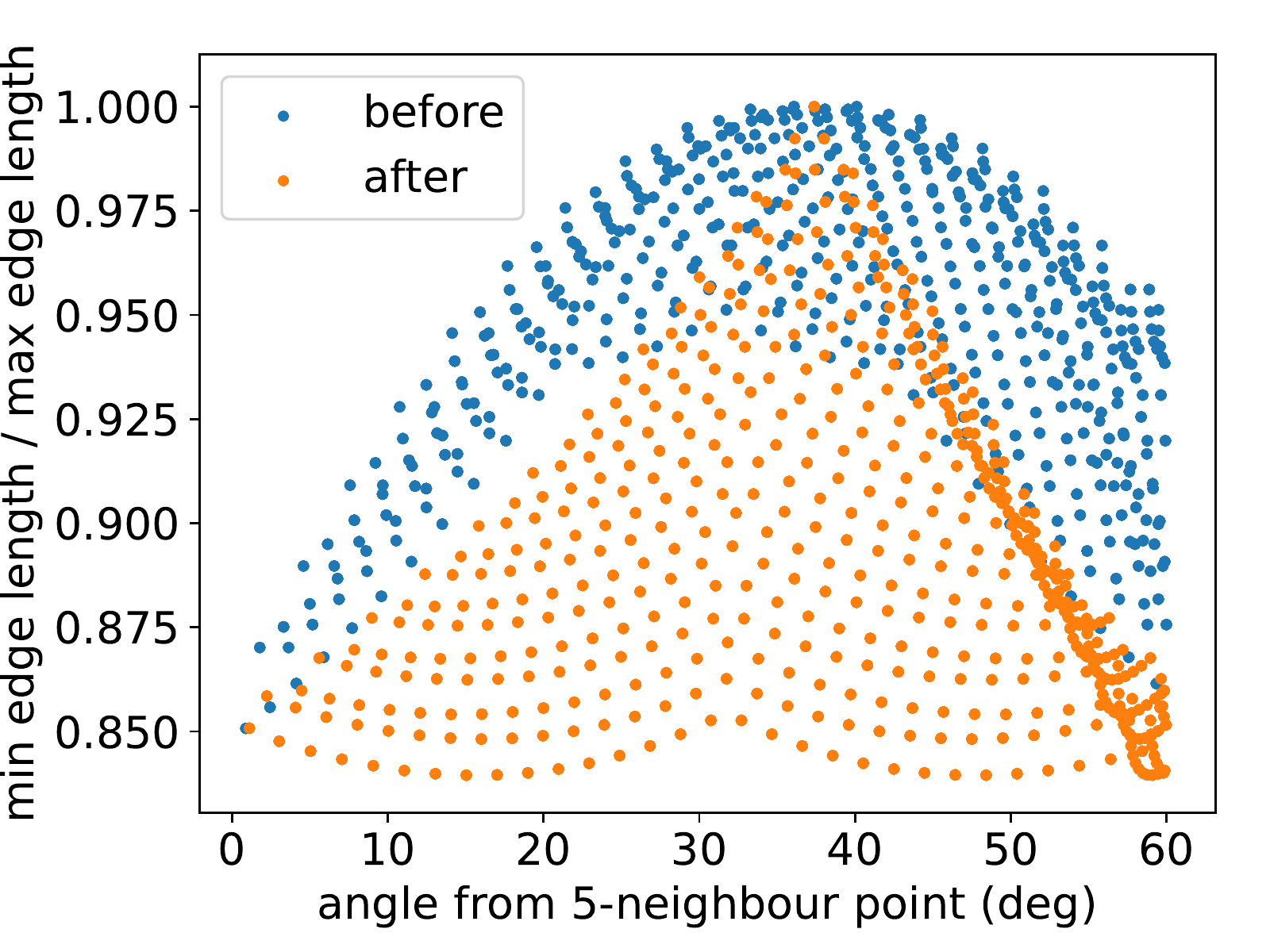}
  \caption{A graph of cell regularity, defined as the ratio of minimum
to maximum edge length, against angle from the special 5-neighbour
points, before and after the optimal transport procedure. While the
total range has hardly changed, the equal-area mesh has slightly less
regular cells than the unadjusted mesh.}
\label{fig:icos-maxmin}
\end{figure}

We give some graphs analysing the effect of this procedure. A graph of
cell area against angle from a specific 5-neighbour point is shown in
\cref{fig:icos-areas}; this can be compared with Figures 1 and 13 in
\citet{iga2014improved}. The resulting cell areas still vary by about 1\%,
due to discretisation error, but this compares very favourably with
other methods. The optimal transport approach controls the area scaling,
so it is natural that we have strong control over the area of the
resulting mesh cells. In \cref{fig:icos-edgelengths}, we plot the
normalised edge lengths against angle. Globally, there is less variation
in the edge lengths after the optimal transport procedure, which is
consistent with the equalisation of cell areas. In \cref{fig:icos-maxmin},
we plot a discrete measure of mesh regularity. This is defined (for ease
of comparison with other methods) as the minimum-to-maximum edge ratio
for each cell. The adjustment procedure leads to cells which are
generally slightly less regular (equilateral) than the original.
Overall, we see that the optimal transport approach has acted as an
effective `mesh smoother', giving a more uniform mesh than the original.
The adjusted mesh is thus very suitable for computing solutions of
certain PDEs.

\section{Conclusions}
\label{sec:conc}

In this paper, we have described a flexible method that produces dynamic
adapted meshes on the sphere which are topologically identical to a
given logical mesh. This involves calculating a map from the sphere
to itself, by solving a PDE of Monge--Ampère type, and applying this map
to the input mesh. By using an optimal transport strategy to find this
map, we have a robust method of constructing such a dynamic mesh.
Significantly, a-priori mesh regularity properties are inherited from
the regularity of this optimal transport map. In particular, we show,
both theoretically and by example, that such meshes on the sphere are
more regular than similar meshes on the plane.

By looking at a particular class of solutions, we analytically derived
some specific maps and also their associated skewness properties. Hence
we could generate various meshes, with provable regularity estimates,
which could be used for practical computations. We also considered more
general examples, calculating the maps and meshes numerically, and
showed that the resulting meshes still have good regularity properties
and compare favourably to those given by other methods. We
observe that the meshes generated through the optimal transport approach
are capable of producing anisotropic meshes aligned with the solution
features, even though the monitor function is only scalar-valued. In
another example we have demonstrated that optimal transport is effective
as a `mesh smoother' for increasing the uniformity of a mesh.

In this paper, we only considered the case where the domain is the
entire sphere $S^2$, which has no boundary. Some applications may
require a mesh for only a subset of the sphere, particularly a
geophysical simulation such as an ocean on the earth. In Euclidean
space, optimal transport theory requires convex domains, which would
seem to rule out this sort of application. Furthermore,
\citet{mccann2001polar} only considers boundary-free manifolds. However,
it is sometimes possible to bypass the convexity requirement by working
on an extended, convex, domain, and setting the monitor to zero outside
the true domain. This is done in, for example,
\citet{benamou2014numerical}. We have not attempted to replicate this
on the sphere.

A realistic geophysical application on the sphere would likely require
a three-dimensional mesh. The earlier paper \citet{browne2014fast}
already considered fully three-dimensional optimal transport mesh
adaptivity in a cuboid domain. Unfortunately, the analogous spherical
shell is not a convex domain, so our method would not work without
further modifications. However, it is unlikely that full three-dimensional
adaptivity is desirable in a geophysical application, since accurate
representation of pressure gradient terms necessitates that cells should
be in vertically-aligned columns. It is more likely that some kind of
2+1D adaptivity would be used. The base mesh can be adapted following
the methods described in this paper, then the nodes in each column can
be relocated up or down separately. This also reduces the computational
complexity of the problem significantly.

Of course, for useful problems, finding a mesh is just one part of
solving a physical problem represented by the time-evolving solution of
a PDE. The equation must then be discretised on the mesh using, for
example, a finite volume or finite element approach. There are numerous
issues that must still be investigated. In the context of
advection-dominated flows, this includes the accuracy and stability of
the solution, the dispersion relations of any waves, the ability of the
mesh to support balanced flows, and the construction of suitable monitor
functions to achieve these. This ongoing research will be the subject of
future papers.

\section*{Acknowledgements}
We would like to thank Hilary Weller and Jemma Shipton for useful
discussions about mesh properties, and William Saunders for help in
producing the vector graphics. We would also like to thank the anonymous
referees for their very helpful comments on an earlier version of this
paper. This work was supported by the Natural Environment Research
Council [grant numbers NE/M013480/1, NE/M013634/1]. This project has
received funding from the European Research Council (ERC) under the
European Union's Horizon 2020 research and innovation programme (grant
agreement no 741112).

\appendix

\section{Code availability}

The numerical calculations made use of \emph{SciPy} \citep{scipy-cite},
particularly the integration and optimisation routines. When running the
numerical mesh-generation methods developed in \citet{mcrae2018optimal},
we used the \emph{Firedrake} finite element software
\citep{rathgeber2016firedrake}, including specialist functionality
developed in \citet{rognes2013automating,alnaes2014unified,
mcrae2016automated,homolya2016parallel,luporini2017algorithm,
homolya2018tsfc,homolya2017exposing}. Firedrake itself relies on
\emph{PETSc} \citep{petsc-user-ref,petsc-efficient} and \emph{petsc4py}
\citep{dalcin2011parallel}.

The code for the numerical experiments can be found in the supplementary
material to this paper. While the code should be compatible with
Firedrake for the foreseeable future, the precise versions of Firedrake
components that were used in this paper are archived at
\citet{zenodo_firedrake}.

\bibliography{spheregeom}

\end{document}